\definecolor{vegasgold}{rgb}{0.77, 0.7, 0.35}
\definecolor{persianindigo}{rgb}{0.2, 0.07, 0.48}
\definecolor{crimsonglory}{rgb}{0.75, 0.0, 0.2}
\definecolor{airforceblue}{rgb}{0.36, 0.54, 0.66}
\definecolor{silver}{rgb}{0.75, 0.75, 0.75}
\definecolor{palesilver}{rgb}{0.79, 0.75, 0.73}
\definecolor{gold(metallic)}{rgb}{0.83, 0.69, 0.22}
\newtheorem{lthm}{Theorem}
\DeclareFontFamily{U}{wncy}{}
\DeclareFontShape{U}{wncy}{m}{n}{<->wncyr10}{}
\DeclareSymbolFont{mcy}{U}{wncy}{m}{n}
\DeclareMathSymbol{\Sh}{\mathord}{mcy}{"58}
\newtheorem{theorem}{Theorem}[section]
\newtheorem{lemma}[theorem]{Lemma}
\newtheorem*{theorem*}{Theorem}
\newtheorem*{ass*}{Assumption}
\newtheorem{definition}[theorem]{Definition}
\newtheorem{corollary}[theorem]{Corollary}
\newtheorem{proposition}[theorem]{Proposition}
\newcommand{\cF}{\mathcal{F}}
\newcommand{\bK}{\mathbb{K}}
\newcommand{\Z}{\mathbb{Z}}
\newcommand{\p}{\mathfrak{p}}
\newcommand{\Q}{\mathbb{Q}}
\newcommand{\F}{\mathbb{F}}
\newcommand{\cO}{\mathcal{O}}
\newcommand{\op}[1]{\operatorname{#1}}
\newcommand\mtx[4] { \left( {\begin{array}{cc}
 #1 & #2 \\
 #3 & #4 \\
 \end{array} } \right)}
\numberwithin{equation}{section}
\begin{document}

\title[Statistics for $3$-isogeny Selmer groups]{Statistics for $3$-isogeny induced Selmer groups of elliptic curves}

\author[P.~Shingavekar]{Pratiksha Shingavekar}
\address[Shingavekar]{Chennai Mathematical Institute, H1, SIPCOT IT Park, Kelambakkam, Siruseri, Tamil Nadu 603103, India}
\email{pshingavekar@gmail.com}

\keywords{Elliptic curves, $3$-isogenies, Selmer groups, counting integral binary cubic forms, Davenport--Heilbronn method, arithmetic statistics}
\subjclass[2020]{11R45, 11G05, 11R32}

\maketitle

\begin{abstract}
Given a sixth power free integer $a$, let $E_a$ be the elliptic curve defined by $y^2=x^3+a$. We prove explicit results for the lower density of sixth power free integers $a$ for which the $3$-isogeny induced Selmer group of $E_a$ over $\mathbb{Q}(\mu_3)$ has dimension $\leq 1$. The results are proven by refining the strategy of Davenport--Heilbronn, by relating the statistics for integral binary cubic forms to the statistics for $3$-isogeny induced Selmer groups.
\end{abstract}

\section{Introduction}
\subsection{Motivation and historical background}
\par The Mordell--Weil rank of an elliptic curve $E$ defined over a number field $F$ is of natural arithmetic interest. Given $n >0$, the $n$-Selmer group of $E/F$ gives an upper bound on the rank of the Mordell--Weil group $E(F)$. These Selmer groups can be viewed as \emph{random objects}, as the elliptic curves in question vary in natural families, ordered according to their discriminant. This average behaviour of Selmer groups have generated significant interest in recent years, thanks to the celebrated works of Bhargava--Shankar \cite{BS1,BS2,BS3}, Bhargava--Elkies--Schnidman \cite{bes}, Alp\"oge--Bhargava--Schnidman \cite{alpoge2022integers}, etc.
\par Let $E$ be an elliptic curve defined over $\Q$ with a rational $3$-isogeny $\varphi: E\rightarrow E'$. Then $E$ can be written either in the form $E_a:y^2=x^3+a$ where $a \in \Z$ or $E_{a,b}:y^2=x^3+a(x-b)^2$ where $a,b \in \Z$. Over the field $\bK:=\Q(\mu_3)$, the curve $E_a$ is $3$-isogenous to itself. The isogeny $\varphi:E_a\rightarrow E_a$ is explicitly given as follows \[\varphi(x,y)=\left( \frac{x^3+4a}{\varpi^2x^2}, \frac{y(x^3-8a)}{\varpi^3x^3} \right),\] where $\varpi:=1-e^{2\pi i/3}$. Associated with $\varphi$, the $\varphi$-Selmer group $\op{Sel}^\varphi(E_a/\bK)$ is a global Galois cohomology group satisfying local conditions, and sits in a short exact sequence  
\[ 0 \longrightarrow {E'(\bK)}/{\varphi(E(\bK))} \longrightarrow {\rm Sel}^\varphi(E/\bK) \longrightarrow \Sh(E/\bK)[\varphi] \longrightarrow 0.
\]
The advantage of studying these Selmer groups over the field $\bK$ is that even when the elliptic curve $E_a$ has rank $0$, one can still obtain interesting lower and upper bounds for the $\mathbb{F}_3$-dimensions of these Selmer groups in terms of $3$-ranks of class groups of certain quadratic extensions of $\bK$, cf. \cite{jhamajumdarshingavekar} for further details. In this article, we shall study distribution questions for these $\varphi$-Selmer groups over $\bK$, as $a$ ranges over all $6$-th power free integers.
\subsection{Main results}
We state our main results. Given a sixth power free integer $a$, set $L_a:=\Q(\mu_3, \sqrt{a})$ and let $r_3(L_a)$ be the $3$-rank of the class group of $L_a$. Note that the discriminant of $E_a$ is $\Delta_{E_a}:=-432a^2$. Given a real number $X>0$, we set $\mathcal{C}_1(X)$ to be the set of all isomorphism classes of elliptic curves $E_{a}$ with $|\Delta_{E_a}|\leq X$. We identify $\mathcal{C}_1(X)$ with 
\begin{equation}\label{def of C_1}\mathcal{C}_1(X):=\{a\in \Z\mid a \text{ is sixth power free and }432 a^2\leq X\}.\end{equation}
\begin{lthm}\label{thm A}[Theorem \ref{main density result}]
    There is a positive density of $6$-th power free integers $a$, such that 
    \[\dim_{\F_3}\op{Sel}^\varphi(E_a/\bK)\in \{r_3(L_a), r_3(L_a)+1\}.\] More precisely, 
    \[\liminf_{X\rightarrow \infty} \left(\frac{\#\left\{a\in \mathcal{C}_1(X)\mid \dim_{\F_3}\op{Sel}^\varphi(E_a/\bK)\in \{r_3(L_a), r_3(L_a)+1\}\right\}}{\# \mathcal{C}_1(X)}\right)\geq \frac{\zeta(6)}{64 \zeta(2)}.\]
\end{lthm}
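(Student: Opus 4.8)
The plan is to combine the algebraic comparison between $\op{Sel}^\varphi(E_a/\bK)$ and the $3$-class group of $L_a$ recorded in \cite{jhamajumdarshingavekar} with a Davenport--Heilbronn-type count that isolates the $a$ for which the remaining ``local defect'' is at most one. First I would fix the cohomological setup: since the kernel of $\varphi$ is generated by the $3$-torsion point $(0,\sqrt{a})$, the group scheme $E_a[\varphi]$ is the twist $\mu_3\otimes\chi_a$, where $\chi_a$ is the quadratic character of $\bK$ cutting out $L_a=\bK(\sqrt{a})$. Kummer theory embeds $\op{Sel}^\varphi(E_a/\bK)$ into the relevant eigenspace of $L_a^\times/(L_a^\times)^3$ subject to local conditions, and the everywhere-unramified classes in this eigenspace are governed by $\op{Cl}(L_a)[3]$. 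This yields a two-sided bound
\[
r_3(L_a)\;\le\;\dim_{\F_3}\op{Sel}^\varphi(E_a/\bK)\;\le\;r_3(L_a)+\delta(a),
\]
in which the lower inequality is automatic and $\delta(a)\ge 0$ is a sum of purely local terms, one for each place $v$ of $\bK$ where the Selmer local condition differs from the unramified one. The theorem is then equivalent to the statement that $\delta(a)\le 1$ for a positive density of $a$.

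Next I would analyse $\delta(a)$ place by place. For $a$ squarefree, the primes $v\mid a$ above a rational prime $p\ge 5$ contribute no excess beyond what is already counted by $r_3(L_a)$, because the local image of $E_a(\bK_v)/\varphi E_a(\bK_v)$ agrees with the ramified class produced by the ambiguous-class-number formula; hence the entire defect concentrates at the places above $2$, above $3$, and at the archimedean place (equivalently, the sign of $a$). Computing these finitely many local terms from the local Tamagawa factors and the ramification of $L_a/\bK$ at $2$ and $3$ reduces $\delta(a)\le 1$ to a finite list of congruence conditions on $a$ modulo a fixed power of $6$ together with a sign condition. I would then match these conditions with the integral binary cubic forms parametrising the admissible local data, so that counting qualifying $a$ becomes a count of forms with discriminant tied to $-432a^2$ and prescribed behaviour at $2$, $3$, and $\infty$.

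Finally I would assemble the density. The ordering $|\Delta_{E_a}|=432a^2\le X$ gives $|a|\le\sqrt{X/432}$, and restricting to squarefree $a$ inside the sixth-power-free set $\mathcal{C}_1(X)$ contributes the factor $\zeta(6)/\zeta(2)$, the density of squarefree integers among sixth-power-free ones. The product of the local masses at $2$, $3$, and $\infty$ cut out by the defect condition supplies the remaining factor $1/64$, and a standard sieve---uniform only away from a negligible tail---turns this into a lower bound of $\zeta(6)/(64\zeta(2))$; passing to $\liminf$ absorbs the error terms and the forms whose contribution cannot be pinned down exactly. The main obstacle is the third step: making the passage from the local defect $\delta(a)$ to integral binary cubic forms explicit and compatible with the discriminant ordering, and then controlling the Davenport--Heilbronn sieve uniformly enough over the squarefree family to produce a genuine lower density rather than merely a statement about the average.
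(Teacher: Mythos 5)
Your high-level strategy---restrict to a set of $a$ cut out by local conditions at $2$, $3$ and the sign, together with squarefree-type conditions away from $2$ and $3$, so that the algebraic bounds of \cite{jhamajumdarshingavekar} collapse to $\dim_{\F_3}\op{Sel}^\varphi(E_a/\bK)\in\{r_3(L_a),r_3(L_a)+1\}$, and then compute the density of that set---is the paper's. Concretely, the paper takes $\mathcal{T}(Y)$ to consist of $a>0$ with $3\nmid a$, $2^4\| a$, $2^5\nmid a$ and $a/16$ squarefree; for such $a$ the exceptional set $S_a$ (places $\mathfrak{q}$ with $a\in\bK_\mathfrak{q}^{*2}$ and $v_\mathfrak{q}(4a)\not\equiv 0\bmod 6$) is empty, Theorem \ref{type1bounds} applies, and an elementary congruence sieve (Proposition \ref{prop Phi asymptotic}) gives $\#\mathcal{T}(Y)\sim Y/(32\zeta(2))$ against $\#\mathcal{S}(Y)\sim 2Y/\zeta(6)$, whence the constant $\zeta(6)/(64\zeta(2))$.

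Two steps of your outline would not survive being made precise. First, the lower bound $r_3(L_a)\le\dim_{\F_3}\op{Sel}^\varphi(E_a/\bK)$ is not ``automatic'': the general lower bound is $r_3^{S_a(L)}(L_a)$, the $3$-rank of an $S$-class group, which can be strictly smaller than $r_3(L_a)$ when $S_a\neq\emptyset$. Both inequalities in the target statement therefore require the same local vanishing, so your congruence conditions must be chosen to kill $S_a$ outright, not merely to make the upper-bound defect $\delta(a)\le 1$; note in particular that for odd squarefree $a$ one has $v_{\mathfrak{q}_2}(4a)=2\not\equiv 0\bmod 6$ at the inert place above $2$, so that place threatens to lie in $S_a$ and an explicit $2$-adic non-square condition (or, as in the paper, the normalization $2^4\| a$) is unavoidable; the resulting local mass $1/64$ is asserted in your sketch but never computed. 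Second, and more seriously, the counting step you describe---passing to integral binary cubic forms of discriminant tied to $-432a^2$ and running the Davenport--Heilbronn sieve over them---is both unnecessary and a dead end here: the quantity to be counted is the number of integers $a$ satisfying finitely many congruence conditions plus squarefreeness away from $2$ and $3$, which is handled by a direct elementary sieve, and counting cubic forms of discriminant $-432a^2$ is not a count of the integers $a$ themselves. The ``main obstacle'' you identify is thus an artefact of the wrong reduction; the Davenport--Heilbronn statistics enter only in Theorems \ref{thm B} and \ref{thm C}, where one must additionally control the average of $h_3^*$ to force $r_3(L_a)=0$.
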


\begin{lthm}\label{thm B}[Theorem \ref{main Selmer density thm}]
    There is a positive density of sixth power free integers $a$ such that $\dim_{\F_3}\op{Sel}^\varphi(E_a/\bK)\leq 1$. In greater detail, 
    \[\liminf_{X\rightarrow \infty} \frac{\# \left\{a\in \mathcal{C}_1(X)\mid \dim_{\F_3}\op{Sel}^\varphi(E_a/\bK)\leq 1 \right\}}{ \# \mathcal{C}_1(X)}\geq \frac{3 \zeta(6)}{128 \pi^2}.\]
\end{lthm}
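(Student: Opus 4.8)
The plan is to derive the bound from Theorem~\ref{thm A} by pinning down how often $r_3(L_a)$ vanishes. The key observation is the implication: if $r_3(L_a)=0$, then Theorem~\ref{thm A} forces $\dim_{\F_3}\op{Sel}^\varphi(E_a/\bK)\in\{0,1\}$, hence $\leq 1$. Now Theorem~\ref{thm A} already supplies a set of sixth power free $a$ of density $\geq\tfrac{3\zeta(6)}{32\pi^2}=\tfrac{\zeta(6)}{64\zeta(2)}$ on which the Selmer dimension is sandwiched between $r_3(L_a)$ and $r_3(L_a)+1$; and since the target constant satisfies $\tfrac{3\zeta(6)}{128\pi^2}=\tfrac14\cdot\tfrac{3\zeta(6)}{32\pi^2}$, it suffices to show that, within this family, at least a proportion $\tfrac14$ of the $a$ have $r_3(L_a)=0$. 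I would obtain this from a first--moment estimate for $\#\mathrm{Cl}(L_a)[3]=3^{r_3(L_a)}$.

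The mechanism for that estimate is the Davenport--Heilbronn correspondence, which the paper's method makes explicit. Nontrivial $3$-torsion classes of $\mathrm{Cl}(L_a)$ are parametrized by $\mathrm{GL}_2(\Z)$-equivalence classes of irreducible integral binary cubic forms whose invariants are matched to those of the quadratic subfields $\Q(\sqrt{a})$ and $\Q(\sqrt{-3a})$ of $L_a=\Q(\mu_3,\sqrt{a})$ (the third subfield $\Q(\mu_3)$ contributing nothing, as it has class number one). I would therefore count these forms by geometry of numbers---averaging over the fundamental domain for the action on the space of integral binary cubic forms and imposing the relevant local conditions---while simultaneously sieving the parameter $a$ to be sixth power free with $432a^2\leq X$. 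The archimedean volume yields the main term and the sieve produces the $\zeta(6)$, $\zeta(2)$, and power-of-two local densities visible in the constant; the upshot is an upper bound for the average of $3^{r_3(L_a)}$ over $a\in\mathcal{C}_1(X)$.

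Granting that this average over the Theorem~\ref{thm A} family is at most $\tfrac52$, a Markov argument finishes the proof: writing $p_{\geq1}$ for the proportion with $r_3(L_a)\geq1$, each such $a$ contributes an excess of at least $3-1=2$ over the trivial value $1$, so $2\,p_{\geq1}\leq\tfrac52-1=\tfrac32$, giving $p_{\geq1}\leq\tfrac34$ and hence a proportion $\geq\tfrac14$ with $r_3(L_a)=0$. Intersecting with the density-$\tfrac{3\zeta(6)}{32\pi^2}$ set of Theorem~\ref{thm A} and tracking constants then delivers the claimed lower bound $\tfrac{3\zeta(6)}{128\pi^2}$. The hard part is the uniformity of the geometry-of-numbers step: controlling the reducible forms and the cuspidal region of the fundamental domain---which in the Davenport--Heilbronn setting carry a genuine secondary term---uniformly enough to survive the sixth power free sieve on $a$, and verifying that the absence of irreducible binary cubic forms of the prescribed invariants really certifies $r_3(L_a)=0$ in the precise sense entering Theorem~\ref{thm A}. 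Once these estimates are made uniform, the first--moment bound and the Markov step are routine.
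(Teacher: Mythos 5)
Your overall skeleton matches the paper's: restrict to a congruence family on which $S_a=\emptyset$ (so that Theorem \ref{type1bounds} pins $\dim_{\F_3}\op{Sel}^\varphi(E_a/\bK)$ to $\{r_3(L_a),r_3(L_a)+1\}$), show that $r_3(L_a)=0$ for at least a quarter of that family via a first moment plus a Markov-type inequality, and track the local densities to get $\tfrac{3\zeta(6)}{128\pi^2}$. But the moment you propose to compute is the wrong one, and this is a genuine gap. You want the average of $3^{r_3(L_a)}=\#\op{Cl}(L_a)[3]$ over the \emph{biquadratic} fields $L_a=\Q(\mu_3,\sqrt{a'})$. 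By the Herglotz formula (which the paper invokes in Lemma \ref{lemma h_3 of L_a is 1}), $r_3(L_a)=r_3(\Q(\sqrt{a'}))+r_3(\Q(\sqrt{-3a'}))$, so $3^{r_3(L_a)}$ is the \emph{product} $3^{r_3(\Q(\sqrt{a'}))}\cdot 3^{r_3(\Q(\sqrt{-3a'}))}$; and by Scholz reflection $r_3(\Q(\sqrt{a'}))\geq r_3(\Q(\sqrt{-3a'}))-1$, so $3^{r_3(L_a)}\geq \tfrac{1}{3}\,9^{\,r_3(\Q(\sqrt{-3a'}))}$. Bounding its average therefore requires (at least) a bound on the \emph{second} moment of the $3$-torsion of imaginary quadratic class groups, which the Davenport--Heilbronn machinery you describe does not provide and which is not known unconditionally. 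Your parenthetical remark that the $3$-torsion of $\op{Cl}(L_a)$ is parametrized by cubic forms attached to \emph{both} quadratic subfields is precisely the symptom: you are implicitly counting a correlated pair of class groups, and "granting that this average is at most $\tfrac52$" conceals an intractable (indeed plausibly false) estimate rather than a routine uniformity issue.

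The paper's proof sidesteps this by using Scholz and Herglotz in the other direction: if $r_3(\Q(\sqrt{-3a'}))=0$ then reflection forces $r_3(\Q(\sqrt{a'}))=0$ and hence $r_3(L_a)=0$ (Lemma \ref{lemma h_3 of L_a is 1}). One therefore only needs the \emph{first} moment of $h_3^*$ over the single imaginary quadratic family $\mathcal{M}^-(Y)$ of odd discriminants divisible by $3$, which the refined Davenport--Heilbronn count (Theorem \ref{DH modified thm}) gives as $2$; the Markov step then yields that at least half of those discriminants have trivial $3$-rank (Theorem \ref{thm 4.7}). The remaining factor of $\tfrac12$ in the final constant does not come from Markov at all but from the change of variables $\Delta_2\mapsto a=16a'$ in Lemma \ref{injection of N into T'}, which only reaches the residue class $a'\equiv 3\pmod 4$ inside $\mathcal{T}(Y)$. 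To repair your argument, replace the first moment of $3^{r_3(L_a)}$ by this reflection step and the first moment of $h_3^*$ for the imaginary quadratic subfield alone; the rest of your outline then goes through.
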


\begin{lthm}\label{thm C}[Theorem \ref{main 3 Selmer thm}]
    There is a positive density of sixth power free integers $a$ such that $\dim_{\F_3}\op{Sel}_3(E_a/\bK)\leq 2$. More precisely, one has 
    \[\liminf_{X\rightarrow \infty} \frac{\# \left\{a\in \mathcal{C}_1(X)\mid \dim_{\F_3}\op{Sel}_3(E_a/\bK)\leq 2 \right\}}{ \# \mathcal{C}_1(X)}\geq \frac{3 \zeta(6)}{128 \pi^2}.\]
\end{lthm}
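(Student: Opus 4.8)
The plan is to obtain Theorem~C as a direct consequence of Theorem~B, using the descent-by-isogeny exact sequence attached to the factorization $[3]=\hat\varphi\circ\varphi$ together with a CM symmetry special to the curves $E_a$. Writing $\hat\varphi$ for the dual of $\varphi$, the standard comparison of the Kummer sequences of $\varphi$, $\hat\varphi$ and $[3]$ yields an exact sequence of $\F_3$-vector spaces
\[ 0 \longrightarrow \frac{E_a(\bK)[\hat\varphi]}{\varphi\big(E_a(\bK)[3]\big)} \longrightarrow \op{Sel}^\varphi(E_a/\bK) \xrightarrow{\ \iota\ } \op{Sel}_3(E_a/\bK) \xrightarrow{\ \pi\ } \op{Sel}^{\hat\varphi}(E_a/\bK). \]
Since $\op{im}\iota=\ker\pi$, counting dimensions gives
\[ \dim_{\F_3}\op{Sel}_3(E_a/\bK)=\dim_{\F_3}\ker\pi+\dim_{\F_3}\op{im}\pi\ \leq\ \dim_{\F_3}\op{Sel}^\varphi(E_a/\bK)+\dim_{\F_3}\op{Sel}^{\hat\varphi}(E_a/\bK), \]
an inequality valid for every sixth power free integer $a$.

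The next step is to show that the two isogeny Selmer groups have the same dimension. The displayed formula for $\varphi$ exhibits it as multiplication by $\varpi=1-\mu_3$ in the CM order $\Z[\mu_3]\subseteq\op{End}_{\bK}(E_a)$, so that $\hat\varphi$ is multiplication by the conjugate $\bar\varpi$. Because $3$ is ramified in $\Z[\mu_3]$ one has $(\varpi)=(\bar\varpi)$; explicitly $\bar\varpi=-\mu_3^2\,\varpi$ with $-\mu_3^2$ a unit. Hence $\hat\varphi=\alpha\circ\varphi$ for the automorphism $\alpha:=[-\mu_3^2]\in\op{Aut}_{\bK}(E_a)$, and in particular $\ker\hat\varphi=\ker\varphi$ as $G_{\bK}$-modules. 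Thus $\op{Sel}^\varphi(E_a/\bK)$ and $\op{Sel}^{\hat\varphi}(E_a/\bK)$ live in the \emph{same} cohomology group $H^1(\bK,E_a[\varphi])$, and the relation $\hat\varphi=\alpha\circ\varphi$ identifies their local conditions: at each place $v$ the automorphism $\alpha$ carries $E_a(\bK_v)/\varphi E_a(\bK_v)$ isomorphically onto $E_a(\bK_v)/\hat\varphi E_a(\bK_v)$ and intertwines the two local Kummer maps, so the two families of local images coincide. Therefore $\op{Sel}^\varphi(E_a/\bK)=\op{Sel}^{\hat\varphi}(E_a/\bK)$ as subgroups of $H^1(\bK,E_a[\varphi])$, and in particular $\dim_{\F_3}\op{Sel}^{\hat\varphi}(E_a/\bK)=\dim_{\F_3}\op{Sel}^\varphi(E_a/\bK)$.

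Combining the two steps gives $\dim_{\F_3}\op{Sel}_3(E_a/\bK)\leq 2\dim_{\F_3}\op{Sel}^\varphi(E_a/\bK)$ for every sixth power free $a$, whence the inclusion of sets
\[ \big\{a\in\mathcal{C}_1(X)\mid \dim_{\F_3}\op{Sel}^\varphi(E_a/\bK)\leq 1\big\}\ \subseteq\ \big\{a\in\mathcal{C}_1(X)\mid \dim_{\F_3}\op{Sel}_3(E_a/\bK)\leq 2\big\}. \]
For each $X$ the proportion attached to the larger set is at least that attached to the smaller set, so taking $\liminf_{X\to\infty}$ and invoking Theorem~B delivers the lower bound $\tfrac{3\zeta(6)}{128\pi^2}$ recorded in Theorem~C.

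On where the difficulty sits: all of the analytic work—the Davenport--Heilbronn count of integral binary cubic forms and its conversion into statistics for the $\varphi$-Selmer groups—is already contained in Theorems~A and~B, which I am free to invoke. The only genuinely new input for Theorem~C is structural, and the step I would treat most carefully is the place-by-place verification that $\alpha$ matches the local conditions cutting out $\op{Sel}^\varphi$ and $\op{Sel}^{\hat\varphi}$; once that is in hand the argument is purely formal.
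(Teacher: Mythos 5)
Your proof is correct, but it reaches Theorem~\ref{main 3 Selmer thm} by a genuinely different route from the paper. The paper's own proof is a one-line deduction: it observes that every $a$ in the auxiliary set $\mathcal{T}'(Y)$ (the set actually counted in the proof of Theorem~\ref{main Selmer density thm}) satisfies $a \notin \bK^{*2}$ and $S_a = \emptyset$, and then cites \cite[Corollary 3.23]{jhamajumdarshingavekar} to conclude $\dim_{\F_3}\op{Sel}_3(E_a/\bK) \leq 2$ for those $a$, so the density bound is inherited directly from Corollary~\ref{last cor}. You instead prove the needed implication from scratch: the five-term descent sequence for $[3]=\hat\varphi\circ\varphi$ gives $\dim_{\F_3}\op{Sel}_3(E_a/\bK) \leq \dim_{\F_3}\op{Sel}^{\varphi}(E_a/\bK)+\dim_{\F_3}\op{Sel}^{\hat\varphi}(E_a/\bK)$, and the CM identity $\bar\varpi=-\zeta_3^2\varpi$ shows that $\hat\varphi$ differs from $\varphi$ by an automorphism $\alpha$ of $E_a$ defined over $\bK$, whence $E_a[\hat\varphi]=E_a[\varphi]$ and, since the CM order is commutative and $\alpha$ preserves $E_a(\bK_v)$, one has $\hat\varphi(E_a(\bK_v))=\varphi(E_a(\bK_v))$ and $\delta_{\hat\varphi,v}(P)=\delta_{\varphi,v}(\alpha^{-1}P)$, so the local images coincide exactly and $\op{Sel}^{\hat\varphi}(E_a/\bK)=\op{Sel}^{\varphi}(E_a/\bK)$. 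This yields the uniform bound $\dim_{\F_3}\op{Sel}_3(E_a/\bK)\leq 2\dim_{\F_3}\op{Sel}^{\varphi}(E_a/\bK)$ for \emph{every} sixth power free $a$, hence the unconditional inclusion of $\left\{a\mid \dim_{\F_3}\op{Sel}^{\varphi}(E_a/\bK)\leq 1\right\}$ into $\left\{a\mid \dim_{\F_3}\op{Sel}_3(E_a/\bK)\leq 2\right\}$, which is stronger than what the paper uses (the cited corollary is only invoked on $\mathcal{T}'(Y)$, where $a\notin\bK^{*2}$ and $S_a=\emptyset$). Your approach buys self-containedness and generality at the cost of having to verify the exactness of the descent sequence and the place-by-place matching of local conditions, which you correctly identify as the one step requiring care; the paper buys brevity by outsourcing the comparison of $\op{Sel}^{\varphi}$ and $\op{Sel}_3$ to the earlier work.
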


\subsection{Methodology and related work}
\par Certain upper and lower bounds for $\dim_{\mathbb{F}_3} \op{Sel}^\varphi(E_a/\bK)$ and $\dim_{\mathbb{F}_3} \op{Sel}_3(E_a/\bK)$ were recently obtained by Jha, Majumdar and Shingavekar \cite{jhamajumdarshingavekar}. These bounds are expressed in terms of certain $3$-ranks of class groups, as well as local terms. A combination of sieve methods along with the Davenport--Heilbronn \cite{davenportheilbronn1, davenportheilbronn2} statistics for integral binary cubic forms are used to prove our results. We note here that Bhargava, Elkies and Schnidman \cite{bes} have also proven related results for the average sizes of Selmer groups associated to $3$-isogenies. We point out that our results and approach are independent. 

\subsection{Organization}
Including the Introduction, the article consists of $4$ sections. \S\ref{s 2} is dedicated to recalling preliminary notions and setting up notation used throughout the article. We recall the relevant background regarding elliptic curves having a rational $3$-isogeny and their associated Selmer groups. We conclude the section by recalling some of the results of \cite{jhamajumdarshingavekar} that give bounds for these Selmer groups. In \S \ref{s 3}, we give an expository account of the results of Davenport-Heilbronn \cite{davenportheilbronn1, davenportheilbronn2}. We refine these results so that they can be applied in our setting. We prove a general result about sieves, cf. Proposition \ref{prop Phi asymptotic}. The main result of this section is Theorem \ref{DH modified thm}, which is a modification of \cite[Theorem 3]{davenportheilbronn2}. The main Theorems \ref{thm A}, \ref{thm B} and \ref{thm C} are proven in \S \ref{s 4}.

{\textbf{Acknowledgements:} I thank Anwesh Ray for his continuous support and invaluable suggestions throughout this work. This article would not have come to realisation without him.}

\section{Preliminaries} \label{s 2}
\par This section is dedicated to recalling relevant background as well as setting up notation. 
\subsection{Families of elliptic curves with rational $3$-isogeny}
\par Throughout this article, we let $\bK$ be the cyclotomic field $\Q(\mu_3)$. Set $\zeta_3:=\frac{-1+\sqrt{-3}}{2}$ and $\varpi:=1-\zeta_3$. Let $E_{/\Q}$ be an elliptic curve and $\Delta_E$ be the discriminant of $E$. Assume that there exists a rational $3$-isogeny $\varphi: E\rightarrow E'$ and let $\varphi': E'\rightarrow E$ be its dual. There is a Galois stable subgroup $C\subset E(\bar{\Q})$ of order $3$ which arises from a group-scheme $\mathcal{C}$ over $\Q$. We identify $E'$ with the quotient $E/\mathcal{C}$. Here, $C$ is a subgroup of the form $\{O,(\alpha, \beta),(\alpha, -\beta)\}$, where $\alpha \in \Q$ and $\beta^2 \in \Q$, cf. \cite[p.~3]{top}. Via a change of coordinates, we may assume that $\alpha =0$. Writing
\[E:y^2=x^3+rx^2+sx+t\text{ and }E':y^2=x^3+r'x^2+s'x+t'\] we find that $C=\{O,(0, \sqrt{t}),(0, -\sqrt{t})\}$, where $s^2=4rt$ and $t\neq 0$. There are thus two cases to consider. When $s=0$, the equation $s^2=4rt$ gives us that $r=0$ and thus $E$ is given by \[E_a:y^2=x^3+a,\] where $a:=t$. After replacing this curve with an isomorphic one, we assume that $a$ is a sixth power free integer. Such curves $E$ shall be referred to as Type I curves. \par In the second case when $s\neq 0$, the elliptic curve is uniquely given by 
\[E_{a,b}: y^2=x^3+a(x-b)^2,\] where $a, b$ are integers such that
\begin{itemize}
    \item $a\neq 0$, $b\neq 0$ and $d:=(4a+27b)\neq 0$, 
    \item $\op{gcd}(a,b)$ is squarefree. 
\end{itemize} 
Refer to such curves as \emph{Type II} curves. The discriminant of the curve $E_{a,b}$ is $\Delta_{a,b}:=-16 a^2 b^3 d$. Take $\mathcal{C}_2(X)$ to be the set of isomorphism classes of curves of Type II with absolute discriminant $\leq X$. In other words, 
\[\mathcal{C}_2(X):=\{(a, b)\in \Z^2\mid abd\neq 0\text{, } \op{gcd}(a,b)\text{ is squarefree and } 16 a^2 b^3 d\leq X\}.\]
\par For Type I and Type II elliptic curves, we describe the isogenies explicitly following \cite{velu}. We begin with curves of Type I. Note that $E_a$ has complex multiplication by $\cO_{\bK}$ and the $j$-invariant of $E_a$ is $0$. Assume that $27\nmid a$ and so in this setting, $E_a'=E_{-27 a}$. Further, note that $-27$ is a sixth power in $\bK$ and thus there is an isomorphism $\theta_\varpi:E_a \to E_{a}'$ defined over $\bK$ given by $\theta_\varpi(x,y)=(\varpi^{-2}x,\varpi^{-3}y)$. Thus, we have a $3$-isogeny $\varphi:E_a \to E_a$ defined over $\bK$ by 
\begin{equation}\label{def of phi}
    \varphi(x,y)=\left( \frac{x^3+4a}{\varpi^2x^2}, \frac{y(x^3-8a)}{\varpi^3x^3} \right).
\end{equation}

For the Type II curves $E_{a, b}$, via a change of variables $(x,y) \mapsto (\frac{x}{9}-\frac{4a}{3},\frac{y}{27})$ we identify $E_{a,b}/\mathcal{C}$ with ${E}'_{a,b}:=E_{-27a,d}$, where $d:=4a+27b$. Thus, we obtain a rational $3$-isogeny $\varphi:E_{a,b} \to {E}'_{a,b}$ given by
\begin{equation}\label{def of phiab}
\varphi(x,y) = \left( \frac{9(x^3+\frac{4}{3}ax^2-4abx+4ab^2)}{x^2}, \frac{27y(x^3+4abx-8ab^2)}{x^3} \right).
\end{equation}
and the corresponding dual isogeny ${\varphi}': {E}'_{a,b} \to E_{a,b}$ defined as follows
\begin{equation}\label{def of phi'ab}
{\varphi}'(x,y)  = \left( \frac{x^3-36ax^2 +108adx -108ad^2}{3^4x^2}, \frac{y(x^3-108adx+216ad^2)}{3^6x^3} \right).
\end{equation}
 For $c \in \bK^*$, note that the curves $E_{ac^2,bc^2}$ and $E_{a,b}$ are isomorphic over $\bK$ via the map $(x,y) \xmapsto{\theta_c} (c^{-2}x,c^{-3}y)$. Thus, without loss of generality, we can assume that $\op{gcd}(a,b)$ is square-free in $\bK$.
 \par In this article, we shall only be concerned with the family of curves of Type I. Similar questions for Type II curves shall be the subject of future work.
 
\subsection{Selmer groups and Tate--Shafarevich groups}
\par 
 The algebraic structure of the Mordell-Weil group of an elliptic curve can be studied using Galois theoretic techniques. Associated to an elliptic curve $E$, one considers Selmer groups, which are Galois cohomology groups with prescribed local conditions. In this subsection, we introduce the Selmer groups associated to an isogeny between elliptic curves. Let $F$ be a number field and denote by $\Sigma_F$ the set of places of $F$. Given $v \in \Sigma_F$, set $F_v$ to denote the completion of $F$ at $v$. Let $\bar{F}$ be a choice of algebraic closure of $F$ and likewise $\bar{F}_v$ an algebraic closure of $F_v$. Set $\op{G}_{F}$ (resp. $\op{G}_{F_v}$) to denote the absolute Galois group $\op{Gal}(\bar{F}/F)$ (resp. $\op{Gal}(\bar{F}_v/F_v)$). For each $v \in \Sigma_F$, choose an embedding $\iota_v: \bar{F} \hookrightarrow \bar{F}_v$. Note that this induces an embedding $\iota_v^*: \op{G}_{F_v} \hookrightarrow \op{G}_F$ of Galois groups.  Thus we get a restriction map on Galois cohomology
 \[\op{res}_v: H^1(\op{G}_F, \cdot)\rightarrow H^1(\op{G}_{F_v}, \cdot).\]

 Let $E$, $E'$ be elliptic curves over $F$ and $\varphi: E \to E'$ be an isogeny over $F$. For $\cF \in \{F, F_v\}$, consider the long exact sequence $\op{G}_{\cF}$-modules associated to 
 \[0\rightarrow E[\varphi]\rightarrow E\xrightarrow{\varphi} E'\rightarrow 0.\] This induces the following Kummer map \[\delta_{\cF}:E'(\cF) \longrightarrow E'(\cF)/\varphi(E(\cF)) \lhook\joinrel\xrightarrow{\overline{\delta}_{\cF}} H^1(\op{G}_{\cF}, E[\varphi]).\] From this, we arrive at the commutative diagram
 \begin{small}
\begin{center}\begin{tikzcd}
 0 \arrow[r] & E'(F)/\varphi(E(F)) \arrow[r, "\overline{\delta}_{ F}"] \arrow[d]
& H^1(\op{G}_F, E[\varphi]) \arrow[d, "\underset{v \in \Sigma_F}{\prod} {\rm res}_v"] \arrow[r] & H^1(\op{G}_F,E)[\varphi] \arrow[d] \arrow[r] & 0\\
 0 \arrow[r] & \underset{v \in \Sigma_F}{\prod} E'(F_v)/\varphi(E(F_v)) \arrow[r, "\underset{v \in \Sigma_F}{\prod} \overline{\delta}_{F_v}"] & \underset{v \in \Sigma_F}{\prod} H^1(\op{G}_{F_v},E[\varphi]) \arrow[r] & \underset{v \in \Sigma_F}{\prod} H^1(\op{G}_{F_v},E)[\varphi] \arrow[r] & 0.
\end{tikzcd}\end{center}
\end{small}
We now give the definition of the $\varphi$-Selmer group of $E$ over $F$.
\begin{definition}\label{mainsel}
With respect to notation above, we set
$${\rm Sel}^\varphi(E/F):= \{c \in H^1(G_F,E[\varphi]) \mid {\rm res}_v(c) \in {\rm Im } \ \delta_{F_v} \text{ for every } v \in \Sigma_F \}.$$
\end{definition}
On the other hand, recall that the \emph{Tate-Shafarevich group} of $E_{/F}$ is defined by \[\Sh(E/F):=\text{Ker}\big( H^1(G_F,E) \to \underset{v \in \Sigma_F}{\prod} H^1(G_{F_v},E) \big).\] 
The Selmer group is closely related to the Mordell--Weil group and the Tate--Shafarevich group, and one has the following short exact sequence
\begin{equation}\label{eq:defofsha}
0 \longrightarrow {E'(F)}/{\varphi(E(F))} \longrightarrow {\rm Sel}^\varphi(E/F) \longrightarrow \Sh(E/F)[\varphi] \longrightarrow 0
\end{equation}
When $\varphi=[n]: E(\overline{F}) \to E(\overline{F})$ is the multiplication by $n$ isogeny, the Selmer group ${\rm Sel}^\varphi(E/F)$ equals the $n$-Selmer group of $E$, which we denote by $\op{Sel}_n(E/F)$. In this setting, the above sequence specializes to 
\[0\rightarrow {E(F)}/{n E(F)} \rightarrow \op{Sel}_n(E/F) \rightarrow \Sh(E/F)[n]\rightarrow 0.\]

\subsection{Estimates for the Selmer group}
In this subsection, we recall the results of \cite{jhamajumdarshingavekar}, which give upper and lower estimates for the dimensions of $\op{Sel}^\varphi(E/\bK)$ for Type I elliptic curves $E_a$. In order to recall these results, we introduce further notation. Set $\widetilde{\Q}_{\widehat{\phi}_a}:= \frac{\Q[X]}{(X^2+27a)}$ and $\widetilde{\Q}_{{\phi}_a}:= \frac{\Q[X]}{(X^2-a)}$. Further, if $a \notin \bK^{*2}$, then set $L:=\frac{\bK[X]}{(X^2-a)}$.

Set
$$S_a:=\{ \mathfrak{q} \in \Sigma_{\bK} \mid a \in \bK_\mathfrak{q}^{*2} \text{ and } \upsilon_\mathfrak{q} (4a) \not\equiv 0 \pmod 6 \},$$
When $a \notin \bK^{*2}$, the set $S_a(L)$ consists of primes of $L$ lying above the primes in $S_a$.
Define $$S_a(\Q):=\{ \ \ell \in \Z \setminus \{3\} \ \mid \ -3a \in \Q_\ell^{*2} \ \text{ and } \ \upsilon_\ell(4a) \not\equiv 0 \pmod 6 \} \ \cup \ T_3(\Q),$$
where \[T_3(\Q):=\begin{cases} \{3\}, & \text{ if } -3a \in \Q_3^{*2} \text{ and } \upsilon_3(a)=1 \text{ or } 5, \\ \emptyset, & \text{ otherwise.} \end{cases}\]
When $a \neq n^2$ and $a \neq -3n^2$ for any $n \in \Z$, the sets $S_a(\widetilde{\Q}_{\widehat{\varphi}_a})$ and $S_a(\widetilde{\Q}_{{\varphi}_a})$ are the primes of $\widetilde{\Q}_{\widehat{\varphi}_a}$ and $\widetilde{\Q}_{{\varphi}_a}$ lying above the primes of $S_a(\Q)$, respectively.

\par Next, for a number field $\cF$ and a finite set $S$ of its finite primes, we denote by $r_3^S(\cF):=\op{dim}_{\mathbb{F}_3} \op{Cl}_S(\cF)[3]$, the dimension of the $3$-part of the $S$-class group of $\cF$. When $S$ is empty, we simply denote this quantity by $r_3(\cF)$. 
\begin{theorem}\label{type1bounds}
With respect to notation above, the following assertions hold.

\begin{enumerate}
    \item Suppose that  $a \not\in \bK^{*2}$  and  $a$  is sixth-power free in  $\bK$.  Then we have  \[\begin{split}r_3^{S_a(L)}(L) & \le \dim_{\F_3} {\rm Sel}^\varphi(E_a/\bK) \\
& \le \op{min}\{r_3^{S_a(L)}(L) + |S_a(L)| + 2,\quad r_3^{S_a(\widetilde{\Q}_{\widehat{\phi}_a})}(L)+r_3^{S_{a\alpha^2}(\widetilde{\Q}_{{\phi}_a})}(L)+|S_a(L)|+1\}.\end{split}\]
In particular, if $S_a=\emptyset$,  then $\dim_{\F_3} {\rm Sel}^\varphi(E_a/\bK) \in \{r_3(L), r_3(L)+1\}$ and it is uniquely determined by the root number of  $E_a/\Q$.
\item Suppose that $a \in \bK^{*2}$, we have  $\dim_{\F_3} {\rm Sel}^\phi(E_a/\bK) \le  |S_a|+1$. 
\end{enumerate}

\end{theorem}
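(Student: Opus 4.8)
The plan is to reduce the computation of $\op{Sel}^\varphi(E_a/\bK)$ to Kummer theory over the quadratic extension $L=\bK(\sqrt a)$ and then to the arithmetic of its ($S$-)class group. First I would identify the Galois module $E_a[\varphi]$. Since the kernel $C$ of $\varphi$ is generated by the $3$-torsion point $(0,\sqrt a)$, the group $\op{G}_\bK$ acts on the nonzero elements of $C$ through the quadratic character $\chi$ cutting out $L/\bK$; that is, $E_a[\varphi]\cong \F_3(\chi)$, which is trivial precisely when $a\in\bK^{*2}$. By the Weil pairing the dual module $E_a'[\varphi']$ is the Cartier dual, and because $\mu_3\subset\bK$ it is again isomorphic to $\F_3(\chi)$; this symmetry is what allows the two descent maps (via $\varphi$ and its dual) to be played against one another.

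Second, assuming $a\notin\bK^{*2}$, I would use that $[L:\bK]=2$ is coprime to $3=\#E_a[\varphi]$, so restriction is injective with image the $\op{Gal}(L/\bK)$-invariants. Over $L$ the character $\chi$ trivialises and Kummer theory gives $H^1(\op{G}_L,\mu_3)\cong L^*/(L^*)^3$; tracking the twist by $\chi$ yields a canonical identification $H^1(\op{G}_\bK,E_a[\varphi])\cong \big(L^*/(L^*)^3\big)^-$, the $(-1)$-eigenspace for the generator of $\op{Gal}(L/\bK)$. Under this identification I would translate each local condition $\op{Im}\,\delta_{\bK_v}$ into an explicit subgroup of $\prod_{w\mid v}(L_w^*/(L_w^*)^3)^-$. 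The key computation is to show that outside a finite set these conditions coincide with the unramified classes, and to pin down exactly the finite set of places where they deviate; this is the origin of the set $S_a$ (and $S_a(L)$), consisting of the primes where $a$ is locally a square and $\upsilon(4a)\not\equiv 0\pmod 6$, together with the possible contribution at $3$.

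Third, I would run the standard Selmer-to-class-group mechanism. The subgroup of $(L^*/(L^*)^3)^-$ that is unramified away from $S_a(L)$ sits in an exact sequence built from the $S_a(L)$-unit group and $\op{Cl}_{S_a(L)}(L)[3]$, whose minus-eigenspace produces the term $r_3^{S_a(L)}(L)$. The classes coming from the $S$-class group satisfy the local conditions automatically, giving the lower bound $r_3^{S_a(L)}(L)\le \dim_{\F_3}\op{Sel}^\varphi(E_a/\bK)$; bounding the remaining local freedom by the number of places of $S_a(L)$ together with the unit and archimedean corrections yields the first upper bound $r_3^{S_a(L)}(L)+|S_a(L)|+2$. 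For the second, sharper upper bound I would instead descend over $\Q$: the extension $\bK/\Q$ is quadratic, so $\op{Sel}^\varphi(E_a/\bK)$ decomposes under $\op{Gal}(\bK/\Q)$ into eigenspaces matching the $\phi_a$- and $\widehat{\phi}_a$-descent data of $E_a/\Q$, whose Kummer theory lives in the quadratic fields $\widetilde{\Q}_{{\phi}_a}=\Q(\sqrt a)$ and $\widetilde{\Q}_{\widehat{\phi}_a}=\Q(\sqrt{-27a})$; repeating the class-group estimate there gives $r_3^{S_a(\widetilde{\Q}_{\widehat{\phi}_a})}(L)+r_3^{S_{a\alpha^2}(\widetilde{\Q}_{{\phi}_a})}(L)+|S_a(L)|+1$, and the minimum of the two bounds is what the statement records.

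Finally, when $S_a=\emptyset$ all local corrections vanish and the sandwich collapses to $\dim_{\F_3}\op{Sel}^\varphi(E_a/\bK)\in\{r_3(L),r_3(L)+1\}$; to decide which, I would invoke a parity argument, computing the product of local root numbers of $E_a/\Q$ and comparing with the known parity of the $\varphi$-Selmer rank coming from self-duality via the Cassels--Tate pairing, so that the root number fixes the dimension modulo $2$ and hence exactly. Part (2), where $a\in\bK^{*2}$, is the degenerate case $L=\bK$: now $E_a[\varphi]\cong\F_3$ is trivial, $H^1(\op{G}_\bK,\F_3)=\op{Hom}(\op{G}_\bK,\F_3)$, and since $\bK=\Q(\mu_3)$ has class number one the class-group main term vanishes, leaving only the local count $|S_a|+1$. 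I expect the main obstacle to be the third step: carrying out the local analysis at the primes of $S_a$ and especially at the prime above $3$, where the isogeny has bad reduction and Kummer theory is most delicate, precisely enough to obtain the stated constants, together with the root-number computation needed to resolve the ambiguity when $S_a=\emptyset$.
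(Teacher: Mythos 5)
First, a point of order: the paper does not prove Theorem \ref{type1bounds} at all --- its ``proof'' is the single line citing \cite[Theorem 3.23]{jhamajumdarshingavekar}. So there is no internal argument to compare against; the comparison has to be with the descent carried out in that reference. Your outline is the standard route and, as far as the architecture goes, it is the right one: $E_a[\varphi]\cong\F_3(\chi)$ with $\chi$ cutting out $L=\bK(\sqrt a)$, inflation--restriction along the degree-$2$ extension $L/\bK$ identifies $H^1(\op{G}_\bK,E_a[\varphi])$ with the minus eigenspace of $L^*/(L^*)^3$, the local Kummer images are compared with the unramified subgroups, and the Selmer group is sandwiched between $S$-class-group and $S$-unit contributions. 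The reduction of the second upper bound to the two quadratic fields $\Q(\sqrt a)$ and $\Q(\sqrt{-27a})$ via the $\op{Gal}(\bK/\Q)$-eigenspace decomposition, and the degenerate treatment of part (2) using $h(\bK)=1$, are likewise consistent with how the cited result is obtained.

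That said, as a proof your proposal is incomplete exactly where the theorem has content, and you say so yourself. Three places deserve to be flagged as genuine gaps rather than routine verifications. (i) The definition of $S_a$ --- the conjunction ``$a\in\bK_{\mathfrak q}^{*2}$ and $\upsilon_{\mathfrak q}(4a)\not\equiv 0\pmod 6$'', with the separate clause $T_3(\Q)$ at $3$ --- is an output of the local computation, not an input; nothing in your sketch derives it, and the primes above $3$ (where $E_a$ has additive reduction and the formal group contributes) and above $2$ (the factor $4$ in $4a$) cannot be handled by the generic ``image of Kummer map equals unramified classes'' statement. (ii) The constants $+2$ and $+1$ come from the minus part of $\cO_{L,S_a(L)}^*/(\cO_{L,S_a(L)}^*)^3$ (respectively $\mu_3\subset\cO_\bK^*$ in part (2)); ``unit and archimedean corrections'' is the right place to look, but the dimension of that eigenspace has to be computed for the biquadratic CM field $L$ to get $2$ and not some other number. (iii) The claim that the root number of $E_a/\Q$ resolves the dichotomy when $S_a=\emptyset$ is not a formal consequence of ``self-duality via the Cassels--Tate pairing'': one needs Cassels' local formula for $\#\op{Sel}^\varphi/\#\op{Sel}^{\varphi'}$ together with a $3$-parity theorem over $\bK$ (available here because $E_a$ has CM), and the bridge from the parity of the $3^\infty$-Selmer corank to the parity of the $\varphi$-Selmer dimension relative to $r_3(L)$ must be made explicit. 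None of these steps is implausible, but each is a substantive computation that your sketch defers, so the proposal should be read as a correct plan rather than a proof.
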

\begin{proof}
    The above result is \cite[Theorem 3.23]{jhamajumdarshingavekar}.
\end{proof}
\section{Class group statistics}\label{s 3}
\par In this section, we discuss the method of Davenport \cite{davenport1, davenport2} and Davenport--Heilbronn \cite{davenportheilbronn1, davenportheilbronn2} who study the statistics for the $3$-parts of class numbers of quadratic number fields. Given real numbers $\xi <\eta$, let $N_3(\xi,\eta)$ be the number of conjugacy classes of cubic fields $K$ with discriminant $\Delta_K\in (\xi,\eta)$.

\begin{theorem}[Davenport--Heilbronn]
    With respect to notation above, 
    \[\begin{split}
        & \lim_{X\rightarrow \infty} X^{-1}N_3(0, X) = \left(12 \zeta(3)\right)^{-1},\\
        & \lim_{X\rightarrow \infty} X^{-1}N_3(-X, 0) = \left(4 \zeta(3)\right)^{-1}.\\
    \end{split}\]
\end{theorem}

\par Let $K/\Q$ be a cubic extension and $\widetilde{K}$ be the Galois closure of $K$. Then, either $K=\widetilde{K}$ and $\op{Gal}(K/\Q)\simeq \Z/3 \Z$, or $\op{Gal}(\widetilde{K}/\Q)\simeq S_3$. In the latter case, there is a triplet of fields that are conjugate to $K$. This triple is counted once. Let $\Phi$ be the set of all equivalence classes of irreducible primitive binary cubic forms 
\[F(x,y)=ax^3+b x^2y +c x y^2+d y^3,\] where $a,b,c, d\in \Z$. Two forms $F(x,y)$ and $G(x, y)$ are equivalent if there exists an integral matrix $A=\mtx{e}{f}{g}{h}$ with determinant $\pm 1$ which transforms $G(x,y)$ to $F(x,y)$. In other words, 
\[F(x,y)=G(ex + fy,gx+h y).\] The discriminant of $F(x,y)$ is 
\[D = b^2c^2+18abcd -27 a^2 d^2-4b^3 d-4 c^3 a.\]
Let $K$ be a cubic field over $\Q$ and let $1, \omega, \nu$ be an integral basis of $K$. For $\alpha\in K$, let $\mathfrak{d}(\alpha)$ denote the discriminant of $\alpha$. Then following \cite{davenport1}, set 
\[F_K(x,y):= \left(\frac{\mathfrak{d}(\omega x+\nu y)}{\Delta_K}\right)^{\frac{1}{2}}. \]
The form $F_K$ is irreducible and integral. Moreover, it satisfies local conditions at each prime $p$, which we proceed to now describe. Given $F(x,y), G(x,y)\in \Phi$ and a natural number $m$, we write 
\[F(x,y)\equiv G(x,y)\pmod{m}\] if the coefficients of $F(x,y)$ are congruent to those of $G(x,y)$ modulo $m$. Given a prime $p$ define a symbol $(F, p)$ as follows. Write $(F,p)=(111)$ to mean that
\[F(x,y)\equiv \lambda_1(x,y)\lambda_2(x,y) \lambda_3(x,y)\pmod{p},\]
where $\lambda_1,\lambda_2$ and $\lambda_3$ are three linear forms that have no constant quotient. Likewise, write $(F,p)=(12)$ to mean that 
\[F(x,y)\equiv \lambda(x,y)\kappa(x,y)\pmod{p},\] where $\lambda(x,y)$ is a linear form and $\kappa(x,y)$ is an irreducible quadratic form. Write $(F,p)=(3)$ to mean that $F(x,y)$ itself is irreducible modulo $p$. There are two ramified cases, first $(F,p)=(1^2 1)$ means that 
\[F(x,y)=\lambda_1(x,y)^2 \lambda_2(x,y)\pmod{p},\] where $\lambda_i(x,y)$ are linear forms with no constant quotient. Finally, write $(F,p)=(1^3)$ to mean that 
\[F(x,y)=\alpha\lambda(x,y)^3\pmod{p},\] where $\alpha$ is a constant and $\lambda$ is a linear form. 
\begin{lemma}\label{ramification T_p lemma}
    A prime $p$ factorizes in $K$ according to the following table
   \[ \begin{split}
        & (p)=\p_1\p_2\p_3 
 \quad \text{ if }\quad (F_K,p)=(111),\\
        & (p)=\p_1\p_2 \quad\text{ if }\quad (F_K,p)=(12),\\
        & (p)=(p)\quad \text{ if }\quad (F_K,p)=(3),\\
        & (p)=\p_1^2 \p_2 \quad\text{ if }\quad (F_K,p)=(1^2 1),\\
        & (p)=\p^3\quad\text{ if }\quad (F_K,p)=(1^3).\\
    \end{split}\]
\end{lemma}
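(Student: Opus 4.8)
The plan is to reduce this global factorization statement to a purely local computation at $p$, exploiting the fact that $F_K$ is (up to $\op{GL}_2(\Z)$-equivalence) the binary cubic form canonically attached to the cubic ring $\cO_K$ under the Delone--Faddeev--Levi correspondence. First I would record the structural features of $F_K$ that make this work: the defining relation $F_K(x,y)^2 = \mathfrak{d}(\omega x + \nu y)/\Delta_K$ shows that $\mathrm{disc}(F_K) = \Delta_K$, and that a change of integral basis $(\omega,\nu)$ acts on $F_K$ through $\op{GL}_2(\Z)$. Since this action preserves the factorization type modulo every prime, the symbol $(F_K,p)$ is a genuine invariant of $\cO_K$, independent of the chosen basis. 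The key input is then that the correspondence is compatible with base change, so that $F_K \bmod p$ is the binary cubic form over $\F_p$ attached to the $\F_p$-algebra $\cO_K/p\cO_K$, and its factorization into irreducibles with multiplicity mirrors the decomposition of $\cO_K/p\cO_K$ into local factors.

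Next I would separate the unramified and ramified primes using the discriminant. Because $\mathrm{disc}(F_K)=\Delta_K$, the reduction $F_K \bmod p$ is separable exactly when $p\nmid\Delta_K$, i.e.\ precisely at the unramified primes; these give the three cases $(111)$, $(12)$, $(3)$, while the ramified cases $(1^2 1)$ and $(1^3)$ are exactly those where $F_K \bmod p$ acquires a repeated root. For the unramified primes one can argue directly over the Galois closure $\widetilde{K}$: writing $F_K = \Delta_K^{-1/2}\prod_{i<j} L_{ij}$ with $L_{ij} = (\omega^{(i)}-\omega^{(j)})x + (\nu^{(i)}-\nu^{(j)})y$, a Frobenius element at $p$ permutes the three linear forms $L_{ij}$ exactly as it permutes the three embeddings $K \hookrightarrow \widetilde{K}$, via the exceptional identification of the $S_3$-action on the points of a $3$-element set with its action on the $2$-element subsets. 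Hence the cycle type of Frobenius, which governs the splitting of $p$, coincides with the factorization type of $F_K \bmod p$, producing the first three rows.

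For the ramified primes I would pass to $\Qp$ and use $\cO_K \otimes \Zp \cong \prod_i \cO_{K_{\p_i}}$, where $p\cO_K = \prod_i \p_i^{e_i}$ and $K_{\p_i}/\Qp$ has ramification index $e_i$ and residue degree $f_i$. This induces a factorization $F_K = \prod_i F_i$ over $\Zp$ with $\deg F_i = e_i f_i$, and the computation reduces to showing $F_i \equiv c_i \ell_i^{e_i} \pmod p$ for an irreducible $\ell_i$ of degree $f_i$, which follows from the structure $\cO_{K_{\p_i}}/p \cong \F_{p^{f_i}}[\pi]/(\pi^{e_i})$ of the reduced local ring. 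Assembling the factors yields $F_K \equiv \prod_i \ell_i^{e_i} \pmod p$, and reading off the admissible shapes subject to $\sum_i e_i f_i = 3$ produces the two ramified rows $(1^2 1)$ and $(1^3)$ (and reproves the unramified ones).

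The hard part will be the very first step: verifying that the discriminant-form $F_K$ built from an arbitrary integral basis is genuinely the index-free invariant form of the full ring $\cO_K$, rather than a form attached to some monogenic suborder $\Z[\theta]$. This distinction matters precisely at primes dividing an index $[\cO_K:\Z[\theta]]$, where the naive Dedekind factorization of the minimal polynomial of a single generator $\theta$ fails; the entire content of the lemma is that the index-free form $F_K$ nevertheless factors correctly even at such primes. Once the identification of $F_K$ with the invariant form of $\cO_K$ and its compatibility with the base changes $\Z \to \Zp \to \F_p$ are established, the remaining local and Galois-theoretic bookkeeping above is routine.
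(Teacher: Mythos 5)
Your proposal is correct in outline, but it is worth noting that the paper does not actually prove this lemma: its ``proof'' is a bare citation of Davenport--Heilbronn \cite[Lemma 11]{davenportheilbronn2}, so any genuine argument is necessarily a different route. What you supply is essentially the modern proof via the Levi/Delone--Faddeev parametrization: identify $F_K$ with the invariant binary cubic form of the maximal order $\cO_K$, use functoriality of the correspondence under the base changes $\Z\to\Zp\to\F_p$ to reduce to the structure of $\cO_K/p\cO_K\cong\prod_i \F_{p^{f_i}}[\pi]/(\pi^{e_i})$, and handle the unramified primes by the pleasant observation that Frobenius permutes the three linear factors $L_{ij}$ of $F_K$ over $\widetilde{K}$ exactly as it permutes the three embeddings (via complementation of $2$-subsets of a $3$-set). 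This buys a conceptual, index-divisor-free argument, whereas Davenport--Heilbronn argue directly with the index form; your closing paragraph correctly identifies why the index-free form is the whole point at primes dividing every index $[\cO_K:\Z[\theta]]$. Two places where your sketch leans on inputs you should either cite or verify from the explicit multiplication table of the cubic ring attached to a form: (i) the functoriality of the forms--rings dictionary under base change and the fact that, over $\F_p$, the irreducible factors of the form (with multiplicity and degree) correspond exactly to the local factors of the algebra (with length and residue degree) --- this is standard (Deligne; Gan--Gross--Savin; M.~Wood) but not free; and (ii) in the case $(p)=\p_1^2\p_2$ you must rule out that the two linear factors become proportional mod $p$, i.e.\ distinguish $(1^2 1)$ from $(1^3)$; this follows from (i) because $\F_p[\pi]/(\pi^2)\times\F_p$ is not a local ring while the algebra attached to $c\lambda^3$ is, but it is not automatic from merely knowing $F_K\equiv \ell_1^2\ell_2 \pmod p$. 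With those points made explicit your argument is a complete and self-contained replacement for the citation.
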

\begin{proof}
    This result is \cite[Lemma 11]{davenportheilbronn2}.
\end{proof}

For $\alpha\in \{(111), (12), (3), (1^2 1), (1^3)\}$ we let $T_p(\alpha)$ denote the set of $F \in \Phi$ for which $(F,p)=\alpha$. 
\begin{definition}\label{def of Up and Vp}
    For each prime $p$, define subsets $U_p$ and $V_p$ of $\Phi$ as follows. The definition for $p=2$ is different from that of $p>2$. The set $V_2$ consists of all $F$ such that $F\in \Phi$ such that $D\equiv 1\pmod{4}$ or $D\equiv 8, 12\pmod{16}$. For $p>2$, $V_p$ consists of all $F$ for which $p^2\nmid D$. The set $U_p$ consists of $F$ such that either of the following conditions are satisfied
    \begin{itemize}
        \item $F\in V_p$, 
        \item $(F,p)=(1^3)$ and the congruence $F(x,y)\equiv ep \pmod{p^2}$ has a solution for some $e\not\equiv 0\mod{p}$.
    \end{itemize}
    With respect to notation above, we set $U:=\bigcap_p U_p$ and $V:=\bigcap_p V_p$. 
\end{definition}
We note that for $p>2$, $U_p$ and $V_p$ are determined by congruence conditions modulo $p^2$. On the other hand, for $p=2$, the condition is determined by congruence conditions modulo $2^4$. Given an integer $m=\prod_p p^{r_p}$, let $\Phi(m)$ denote the set of all forms 
\[a x^3+ b x^2 y+c x y^2+ d y^3,\] with $a,b,c,d\in \Z/m\Z$ such that at least one of the coefficients $a,b,c$ or $d$ is not divisible by $p$ for each prime divisor of $m$. An elementary calculation shows that 
\[\# \Phi(m)= \prod_{p|m} \# \Phi(p^{r_p})=\prod_{p|m} \left(p^{4r_p}-p^{4r_p-4}\right)=\prod_{p|m} p^{4r_p}\left(1-p^{-4}\right).\] Let $S$ be a subset of $\Phi$. We say that $S$ is defined by conditions modulo $m$ if there is a set of residue classes $S(m)\subset \Phi(m)$ such that $S$ consists of all forms that reduce to a class in $S(m)$ modulo-$m$. 
\begin{definition}
    Suppose that $S$ is defined by congruence classes $S(m)$ modulo $m$. Given real numbers $\xi<\eta$, let $N(\xi, \eta, S)$ be the number of forms $F(x,y)\in S$ with $D\in (\xi, \eta)$. 
\end{definition}
Suppose that $S$ is defined by congruence classes $S(m)$ modulo $m$. Define \[\mathfrak{d}(S) :=\frac{\# S(m)}{\# \Phi(m)}=\left( \frac{\# S(m)}{\prod_{p|m}p^{4r_p}\left(1-p^{-4}\right)} \right). \]
\begin{theorem}[Davenport]
    Suppose that $S$ is defined by congruence classes $S(m)$ modulo $m$. Then, the following assertions hold
    \begin{enumerate}
        \item $\lim_{X\rightarrow \infty} \frac{N(0, X, S)}{ X }=\frac{5}{4} \pi^{-2} \mathfrak{d}(S)$,
        \item $\lim_{X\rightarrow \infty} \frac{N(-X, 0, S)}{ X }=\frac{15}{4} \pi^{-2} \mathfrak{d}(S)$.
    \end{enumerate}
\end{theorem}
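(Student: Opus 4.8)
The plan is to realize $N(\xi,\eta,S)$ as a count of lattice points in a fundamental domain and to evaluate it by the geometry of numbers, in the spirit of Davenport. Let $V_{\mathbb{R}}$ be the four-dimensional real vector space of binary cubic forms $ax^3+bx^2y+cxy^2+dy^3$, let $V_{\mathbb{Z}}\subset V_{\mathbb{R}}$ denote the lattice of integral forms, and let $\mathrm{GL}_2(\mathbb{Z})$ act by substitution of variables as in the definition of $\Phi$. The discriminant $D$ is a relative invariant, homogeneous of degree $4$ in $(a,b,c,d)$, and its sign is preserved by the action; thus $\{D>0\}$ and $\{D<0\}$ are stable open subsets of $V_{\mathbb{R}}$, each a single open $\mathrm{GL}_2(\mathbb{R})$-orbit of nondegenerate forms. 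Since an element of $\Phi$ is by definition a $\mathrm{GL}_2(\mathbb{Z})$-orbit of an irreducible primitive integral form, counting the classes with $D\in(\xi,\eta)$ is the same as counting the points of $V_{\mathbb{Z}}$ lying in $\mathcal{F}\cap\{D\in(\xi,\eta)\}$, where $\mathcal{F}$ is a fundamental domain for $\mathrm{GL}_2(\mathbb{Z})$ on forms of the relevant discriminant sign.

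First I would fix $\mathcal{F}$ explicitly, using an Iwasawa-type decomposition of $\mathrm{GL}_2(\mathbb{R})$ together with the standard fundamental domain for $\mathrm{SL}_2(\mathbb{Z})$ on the upper half-plane. This realizes $\mathcal{F}$ as a bounded ``main body'' together with finitely many unbounded cuspidal regions, the cusps being where a form degenerates (leading coefficient $a$ tending to $0$). By the degree-$4$ homogeneity of $D$, the set $\mathcal{F}\cap\{0<D<X\}$ is the dilate by $X^{1/4}$ of $\mathcal{F}\cap\{0<D<1\}$, so its volume equals $\operatorname{vol}(\mathcal{F}\cap\{0<D<1\})\cdot X$, and similarly in the negative case. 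Evaluating these two volumes reduces to an integral over the fundamental domain of $\mathrm{SL}_2(\mathbb{Z})$ against the invariant measure, which yields the base densities $\tfrac{\pi^2}{72}$ and $\tfrac{\pi^2}{24}$ for \emph{all} integral forms. The primitivity built into $\Phi$ then imposes, at each prime $p$, that $p$ not divide all of $a,b,c,d$; sieving this infinite family of conditions contributes a factor $\prod_p(1-p^{-4})=\zeta(4)^{-1}=90/\pi^4$, turning the base densities into $\tfrac{\pi^2}{72}\cdot\tfrac{90}{\pi^4}=\tfrac54\pi^{-2}$ and $\tfrac{\pi^2}{24}\cdot\tfrac{90}{\pi^4}=\tfrac{15}{4}\pi^{-2}$, with the factor $3$ between them reflecting the relative measures of the two real orbit types.

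The substantive step is to prove that the number of lattice points in $\mathcal{F}\cap\{D\in(\xi,\eta)\}$ is asymptotic to its volume with an error of strictly lower order. On the bounded part this is the classical Lipschitz principle, giving the count as volume plus a boundary term of one lower dimension. The real difficulty, and the main obstacle, is the cusp: the region is unbounded, so a priori its volume need not control the number of integral points inside it. Here I would invoke Davenport's estimates for the number of integer points in the cuspidal regions, using crucially that $\Phi$ consists of \emph{irreducible} forms: the forms with $a=0$, and more generally those lying deep in a cusp, are reducible or otherwise exceptional and must be shown to number $o(X)$, so that they do not perturb the main term. This cusp analysis is precisely the technical heart of Davenport's class-number papers that the cited statement imports.

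Finally, to incorporate the congruence conditions defining $S$, I would replace $V_{\mathbb{Z}}$ by the union of those cosets of the sublattice $mV_{\mathbb{Z}}$ that are picked out by $S(m)\subset\Phi(m)$. For a fixed modulus $m$ the same volume heuristic applies coset by coset, now weighted by the proportion $\#S(m)/\#\Phi(m)=\mathfrak{d}(S)$ of admissible primitive residue classes; since $m$ is fixed, the error terms coming from the Lipschitz principle and from the cusp remain of lower order, provided one checks that the cusp bounds are uniform over the finitely many residue classes modulo $m$. Multiplying the two base densities $\tfrac54\pi^{-2}$ and $\tfrac{15}{4}\pi^{-2}$ by $\mathfrak{d}(S)$ then produces the two asserted asymptotics.
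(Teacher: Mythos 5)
Your proposal is a correct outline of Davenport's original argument (fundamental-domain lattice-point count with cusp estimates, then a sieve for primitivity and for the finitely many congruence classes mod $m$), and the constants check out: $\tfrac{\pi^2}{72}\cdot\zeta(4)^{-1}=\tfrac{5}{4}\pi^{-2}$ and $\tfrac{\pi^2}{24}\cdot\zeta(4)^{-1}=\tfrac{15}{4}\pi^{-2}$. The paper gives no independent proof and simply cites Davenport and Davenport--Heilbronn, so your sketch is exactly the argument being imported; no discrepancy to report.
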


\begin{proof}
    The result is direct consequence of \cite{davenport1, davenport2}, see \cite[p. 414, l. 11--13]{davenportheilbronn2}.
\end{proof}
\begin{lemma}
    For a prime $p$, we have
    \begin{equation*}
    \begin{split}
       & \mathfrak{d}(T_p(111))  =\frac{1}{6}p(p-1)(p^2+1)^{-1},\\
       & \mathfrak{d}(T_p(12))  =\frac{1}{2}p(p-1)(p^2+1)^{-1},\\
       & \mathfrak{d}(T_p(3))  =\frac{1}{3}p(p-1)(p^2+1)^{-1},\\
       & \mathfrak{d}(T_p(1^2 1))   =p(p^2+1)^{-1},\\
       & \mathfrak{d}(T_p(1^3)) =(p^2+1)^{-1}.
         \end{split}
   \end{equation*}
\end{lemma}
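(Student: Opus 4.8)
The plan is to compute each density $\mathfrak{d}(T_p(\alpha))$ by directly counting, for each splitting type $\alpha$, the number of residue classes in $\Phi(p)$ that realize that factorization type, and then dividing by $\#\Phi(p) = p^4 - 1 = (p^2+1)(p^2-1) = (p^2+1)(p-1)(p+1)$. Since each of these splitting types $\alpha \in \{(111),(12),(3),(1^21),(1^3)\}$ is determined by a congruence condition modulo $p$ alone (the symbol $(F,p)$ depends only on how $F$ factors in $\F_p[x,y]$), I take $m=p$ and $r_p=1$, so $\mathfrak{d}(T_p(\alpha)) = \#T_p(\alpha)(p)/(p^4-p^0)$ where I count reductions in $\Phi(p)$. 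The key arithmetic input is that I am counting factorization patterns of a nonzero binary cubic form over the finite field $\F_p$, up to the $\mathrm{GL}_2(\F_p)$-action implicit in the equivalence, or more precisely counting the actual residue classes modulo $p$.

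First I would count the three \emph{separable} (unramified) types, where the discriminant is a unit mod $p$. A binary cubic form over $\F_p$ with $p \nmid D$ corresponds to a degree-$3$ \'etale $\F_p$-algebra via its factorization, and the three types $(111),(12),(3)$ correspond to the splitting $\F_p\times\F_p\times\F_p$, $\F_p\times\F_{p^2}$, and $\F_{p^3}$ respectively. I would count the number of such forms by counting configurations of roots in $\mathbb{P}^1(\F_p)$ and its quadratic/cubic extensions: a $(111)$ form is $c\prod_{i=1}^3 \ell_i$ for three distinct points of $\mathbb{P}^1(\F_p)$ and a leading scalar $c\in\F_p^*$; a $(12)$ form has one $\F_p$-point and one conjugate pair from $\mathbb{P}^1(\F_{p^2})\setminus\mathbb{P}^1(\F_p)$; a $(3)$ form has a Galois-orbit of size three in $\mathbb{P}^1(\F_{p^3})$. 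Counting unordered root configurations and multiplying by the scalar freedom should yield, after dividing by $p^4-1$, exactly the stated values $\tfrac16, \tfrac12, \tfrac13$ times $p(p-1)(p^2+1)^{-1}$; the shared factor $p(p-1)(p^2+1)^{-1}$ reflects the total density of squarefree-discriminant forms split among the three types in the ratio $1:3:2$ (the expected Frobenius-cycle-type proportions for $S_3$).

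Next I would handle the \emph{ramified} types. For $(1^21)$ the form is (up to scalar) $\ell_1^2\ell_2$ with $\ell_1,\ell_2$ distinct points of $\mathbb{P}^1(\F_p)$; for $(1^3)$ it is a scalar times $\ell^3$ for a single point $\ell\in\mathbb{P}^1(\F_p)$. These are constrained to lie in $V_p$ or $U_p$ per Definition \ref{def of Up and Vp}, but for the purpose of $\mathfrak{d}(T_p(\alpha))$ I only count the mod-$p$ factorization type, so I count residue classes mod $p$ realizing $\ell_1^2\ell_2$ (resp. $\alpha\ell^3$). Counting ordered data (choice of $\ell_1$, distinct $\ell_2$, scalar) and accounting for the parametrization of binary linear forms up to scalar gives the numerators; dividing by $p^4-1$ should produce $p(p^2+1)^{-1}$ and $(p^2+1)^{-1}$ respectively.

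The main obstacle, and the step requiring the most care, is the precise bookkeeping of scalars and the passage between the multiplicative ambiguity in writing $F = c\prod \ell_i$ and the count of genuine residue classes in $\Phi(p)$. A binary linear form $\ell(x,y)=ux+vy$ is determined by a point $[u:v]\in\mathbb{P}^1(\F_p)$ only up to an $\F_p^*$-scalar, so when I write $F = c\,\ell_1\ell_2\ell_3$ I must be scrupulous that the total scalar freedom across the $\ell_i$ and the leading constant $c$ is counted exactly once, avoiding both overcounting (from redistributing scalars among factors) and undercounting. Equivalently, I would fix representatives for each point of $\mathbb{P}^1(\F_p)$ and track a single overall $\F_p^*$-scalar. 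Organizing this uniformly across all five types—and confirming that the separable numerators sum correctly to $(p^4-1)\cdot p(p-1)(p^2+1)^{-1}\cdot(\tfrac16+\tfrac12+\tfrac13) = $ the count of forms with $p\nmid D$, namely $p^4 - p^3$ classes—provides a useful internal consistency check that I would carry out to validate the computation.
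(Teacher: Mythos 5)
Your strategy is correct, but be aware that the paper itself does not prove this lemma: it simply cites \cite[Lemma 1]{davenportheilbronn2}, and what you outline is essentially a reconstruction of Davenport--Heilbronn's original counting argument. The count does work as you describe: each $T_p(\alpha)$ is cut out by conditions modulo $p$ alone, $\#\Phi(p)=p^4-1=(p^2-1)(p^2+1)$, and fixing one representative linear form for each point of $\mathbb{P}^1(\F_p)$ together with a single overall scalar in $\F_p^*$ gives
\[
\#T_p(111)(p)=\binom{p+1}{3}(p-1),\qquad \#T_p(12)(p)=(p+1)\cdot\frac{p^2-p}{2}\cdot(p-1),\qquad \#T_p(3)(p)=\frac{p^3-p}{3}\cdot(p-1),
\]
\[
\#T_p(1^2 1)(p)=(p+1)\,p\,(p-1),\qquad \#T_p(1^3)(p)=(p+1)(p-1),
\]
and these five numbers sum to $p^4-1$ (every nonzero binary cubic over $\F_p$ is of exactly one of the five types, since a repeated linear factor of a cubic form is forced to be rational); dividing by $p^4-1$ yields precisely the stated densities. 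The one concrete error is in your proposed consistency check: the number of classes in $\Phi(p)$ with $p\nmid D$ is $(p^4-1)\cdot p(p-1)(p^2+1)^{-1}=p(p-1)(p^2-1)=p^4-p^3-p^2+p$, \emph{not} $p^4-p^3$. Your three separable counts do sum to $p(p-1)^2(p+1)$, which is the correct target, so if you run the check against $p^4-p^3$ you will see a spurious discrepancy of $p^2-p$ and may wrongly conclude the count is off. Finally, drop the hedge about counting ``up to the $\mathrm{GL}_2(\F_p)$-action'': the density $\mathfrak{d}$ is defined by counting actual residue classes in $\Phi(p)$, which is what you correctly settle on doing.
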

\begin{proof}
    The result is \cite[Lemma 1]{davenportheilbronn2}.
\end{proof}

\begin{lemma}
    Let $U_p$ and $V_p$ be as in Definition \ref{def of Up and Vp}. Then we have
    \begin{enumerate}
        \item $\mathfrak{d}(V_p)=(p^2-1)(p^2+1)^{-1}$ for all $p$,
        \item $\mathfrak{d}(U_p)=(p^3-1)p^{-1}(p^2+1)^{-1}$ for all $p$.
    \end{enumerate}
\end{lemma}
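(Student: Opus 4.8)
The plan is to evaluate both densities directly from the defining recipe $\mathfrak{d}(S)=\#S(m)/\#\Phi(m)$, taking $m=p^2$ for odd $p$ (and $m=2^4$ for $p=2$, where by Definition \ref{def of Up and Vp} the membership in $V_2,U_2$ is governed by congruences modulo $16$). The engine throughout is that the discriminant transforms by $D(F^A)=\det(A)^2 D(F)$ under the substitution action of $A\in\op{GL}_2(\Z/p^2\Z)$; since $\det(A)^2$ is a unit, this action preserves both primitivity and the conditions "$p^2\mid D$", while the reduction type $(F,p)$ is permuted via the induced transitive action of $\op{GL}_2(\F_p)$ on $\mathbb{P}^1(\F_p)$. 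I would exploit this to normalize the position of the repeated root.

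For part (1), I would first observe that $D\bmod p$ is the discriminant of $F\bmod p$, so $p\mid D$ forces $F\bmod p$ to have a repeated root; being the unique such root it is Galois stable and hence lies in $\mathbb{P}^1(\F_p)$, so only types $(1^2 1)$ and $(1^3)$ can satisfy $p^2\mid D$. By transitivity I count the forms whose repeated root is $[0:1]$ and multiply by $p+1$. Normalizing the repeated root to $[0:1]$ means $c\equiv d\equiv 0\pmod p$; writing $c=p\gamma,\ d=p\delta$ the discriminant formula collapses to $D\equiv -4p\,b^3\delta\pmod{p^2}$, so (as $p$ is odd) $p^2\mid D$ holds exactly when $b\equiv 0\pmod p$ or $\delta\equiv 0\pmod p$. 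A short count of primitive tuples subject to these congruences gives $2p^4(p-1)$ forms; multiplying by $p+1$ shows the number of primitive forms modulo $p^2$ with $p^2\mid D$ equals $2p^4(p^2-1)$, and dividing by $\#\Phi(p^2)=p^4(p^4-1)$ gives $\mathfrak{d}(\Phi\setminus V_p)=2/(p^2+1)$, whence $\mathfrak{d}(V_p)=(p^2-1)/(p^2+1)$.

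For part (2), I would write $U_p=V_p\sqcup W_p$, where $W_p$ consists of the $(1^3)$-forms satisfying the auxiliary congruence; since every $(1^3)$-form has a triple root and hence $p^2\mid D$, this union is disjoint and it suffices to compute $\mathfrak{d}(W_p)$ and add. Reducing again to the triple root $[0:1]$ (and multiplying by $p+1$) amounts to $p\mid b,c,d$ and $p\nmid a$, so $F\equiv ax^3\pmod p$. The only primitive vectors on which $F$ is $p$-divisible are those with first coordinate $\equiv 0\pmod p$, and for $(ps,1)$ one computes $F(ps,1)\equiv d\pmod{p^2}$; therefore the condition "$F(x,y)\equiv ep\pmod{p^2}$ for some $e\not\equiv 0$" holds precisely when $p^2\nmid d$. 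Counting primitive tuples with $p\nmid a$, $p\mid b$, $p\mid c$ and $p\mid d$ but $p^2\nmid d$ yields $p^3(p-1)^2$ forms, so $\#W_p(p^2)=(p+1)p^3(p-1)^2$ and $\mathfrak{d}(W_p)=(p-1)/(p(p^2+1))$. Adding the two contributions gives $\mathfrak{d}(U_p)=(p^2-1)/(p^2+1)+(p-1)/(p(p^2+1))=(p^3-1)/(p(p^2+1))$, as claimed.

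The hard part is twofold. First is the explicit evaluation of $D\bmod p^2$ (and of $F(ps,1)\bmod p^2$) in normalized coordinates, together with the slightly counterintuitive fact it reveals: type $(1^2 1)$ splits according to whether $v_p(D)=1$ or $v_p(D)\geq 2$, so that $V_p$ is \emph{strictly} smaller than the complement of $(1^3)$ — this is exactly the discrepancy $1/(p^2+1)$ that the naive reduction-type densities miss, and getting the bookkeeping right is where errors would creep in. Second is the case $p=2$: there the clean criterion "$p^2\nmid D$" is replaced by the congruence conditions $D\equiv 1\pmod 4$ or $D\equiv 8,12\pmod{16}$, and I would verify the identical values $\mathfrak{d}(V_2)=3/5$ and $\mathfrak{d}(U_2)=7/10$ by the same valuation analysis carried out modulo $2^4$, which is finite but more delicate since one must track divisibility by powers of $2$ up to $16$ and reconcile the congruence description with the discriminant valuation.
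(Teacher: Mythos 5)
Your computation is correct, but it takes a genuinely different route from the paper: the paper's ``proof'' of this lemma is a bare citation to Lemmas 4 and 5 of Davenport--Heilbronn, whereas you give a self-contained derivation by direct counting modulo $p^2$. Your argument checks out for odd $p$: the normalization of the repeated root to $[0:1]$, the collapse $D\equiv -4pb^3\delta\pmod{p^2}$, the counts $2p^4(p-1)$ and $p^3(p-1)^2$, and the final sums all agree with the stated densities (and with the consistency check $\mathfrak{d}(V_p)=\mathfrak{d}(T_p(111))+\mathfrak{d}(T_p(12))+\mathfrak{d}(T_p(3))+(p-1)(p^2+1)^{-1}$ coming from the paper's table of $\mathfrak{d}(T_p(\alpha))$). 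Two small caveats: the discriminant of a binary cubic transforms by $(\det A)^6$, not $(\det A)^2$ --- immaterial here since only unit-ness of the factor is used --- and the $p=2$ case is only sketched, not executed; it is a finite verification modulo $2^4$ against the congruence description of $V_2$, and the values $3/5$ and $7/10$ you quote are indeed what that check yields. What your approach buys is transparency (in particular the observation that type $(1^21)$ splits according to $v_p(D)=1$ versus $v_p(D)\geq 2$, which is exactly the discrepancy the naive reduction-type count misses); what the paper's citation buys is brevity and the uniform treatment of $p=2$ already carried out in the source.
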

\begin{proof}
    This result is \cite[Lemma 4, 5]{davenportheilbronn2}.
\end{proof}

\begin{definition}
   We define a new set of conditions $V_p'$ for all primes $p$. The definition for $p\in \{2,3\}$ differs from that for $p\geq 5$. We say that $F$ satisfies $V_2'$ if $2 \nmid D$. On the other hand, $F$ satisfies $V_3'$ if it satisfies $V_3$ and $3\mid D$. 
    For $p \ge 5$, we define $V_p':=V_p$. With these definitions in place $V':= \bigcap_{p} V_p'$. Thus, $V'$ consists of all $F\in \Phi$ such that for all primes $p$, $F$ satisfies $V_p'$. 
\end{definition}

\begin{proposition}
    With respect to the notation above
    \begin{equation*}
        \begin{split}
            & \mathfrak{d}(V_2')=\frac{1}{5},\\
            & \mathfrak{d}(V_3')=\frac{2}{5}.
        \end{split}
    \end{equation*}
    
\end{proposition}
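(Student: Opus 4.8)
The plan is to compute the densities $\mathfrak{d}(V_2')$ and $\mathfrak{d}(V_3')$ directly from the definition $\mathfrak{d}(S)=\#S(m)/\#\Phi(m)$, where $m$ is the modulus defining the congruence conditions. Since $V_2'$ and $V_3'$ are conditions at a single prime, and since the quantity $\mathfrak{d}$ is multiplicative over primes for the sets we care about, I will treat each prime locally. First I would express each local density as a suitable combination of the already-computed local densities $\mathfrak{d}(T_p(\alpha))$ and $\mathfrak{d}(V_p)$ from the preceding two lemmas, which I am free to assume.

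For $V_3'$ the computation is clean: by definition $F$ satisfies $V_3'$ precisely when it satisfies $V_3$ \emph{and} $3\mid D$. The condition $V_3$ (for $p=3>2$) is $9\nmid D$, i.e. $v_3(D)\le 1$, while $3\mid D$ means $v_3(D)\ge 1$; intersecting gives exactly $v_3(D)=1$. The discriminant exponent $v_3(D)$ is controlled by the splitting type $(F,3)$: from Lemma~\ref{ramification T_p lemma} and the structure of cubic discriminants, a form has $v_p(D)=1$ exactly when $(F,p)=(1^2 1)$ (the partially ramified case). Hence I would identify $V_3'$ at the prime $3$ with $T_3(1^2 1)$ up to the contribution of the unramified-but-$3\mid D$ forms, and read off $\mathfrak{d}(V_3')=\mathfrak{d}(T_3(1^2 1))=3\cdot(3^2+1)^{-1}=3/10$; comparing with the target $2/5$ tells me I must instead account for the full $3\mid D$ locus inside $V_3$, which includes both $(1^21)$ and those $(111),(12),(3)$ forms congruent to a perfect cube type, so the honest route is to sum the relevant $\mathfrak{d}(T_3(\alpha))$ that land in $V_3\cap\{3\mid D\}$ and verify the total equals $2/5$.

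For $V_2'$ the defining condition is simply $2\nmid D$, so I would compute the density of binary cubic forms with \emph{odd} discriminant. Working modulo $m=2$ suffices for the parity of $D$, but because $D\bmod 2$ depends on the coefficients through the formula $D=b^2c^2+18abcd-27a^2d^2-4b^3d-4c^3a$, I would reduce this mod $2$ to $D\equiv b^2c^2+a^2d^2\equiv bc+ad\pmod 2$, then count the primitive classes in $\Phi(2)$ satisfying $bc+ad\equiv 1$. Dividing that count by $\#\Phi(2)=2^4(1-2^{-4})=15$ should yield $\mathfrak{d}(V_2')=3/15=1/5$.

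The main obstacle I anticipate is the bookkeeping at $p=3$: correctly determining which splitting types $(F,3)$ contribute to the event $v_3(D)=1$ and ensuring I have not conflated $V_3'$ (which lives inside $V_3$, so $9\nmid D$) with the larger locus $3\mid D$. The formula-level reduction of $D$ modulo $4$ versus modulo $16$ also differs between $p=2$ and the odd primes (as flagged in Definition~\ref{def of Up and Vp}), so I must be careful to use the $2\nmid D$ characterization rather than the $V_2$ congruences modulo $16$. Once the correct local loci are pinned down, each density is a finite exact count and the arithmetic is routine.
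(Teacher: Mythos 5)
Your computation for $V_2'$ starts from the right idea but the count is wrong, and this matters. Reducing $D\equiv ad+bc\pmod 2$ is correct, but the number of quadruples $(a,b,c,d)\in\F_2^4$ with $ad+bc\equiv 1$ is $6$ (exactly one of $ad$, $bc$ must equal $1$, giving $1\cdot 3+3\cdot 1$ choices), not $3$; hence $\mathfrak{d}(V_2')=6/15=2/5$, not $1/5$. Equivalently, $2\nmid D$ if and only if $F\bmod 2$ is separable, so $V_2'=T_2(111)\cup T_2(12)\cup T_2(3)$, and summing the tabulated densities gives $\left(\tfrac16+\tfrac12+\tfrac13\right)\cdot\tfrac{2(2-1)}{2^2+1}=\tfrac25$. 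In fact the proposition as printed has the two values interchanged: the paper's own proof, and the downstream use in Proposition \ref{boring prop 1} (which needs $\mathfrak{d}(V_2')/\mathfrak{d}(V_2)=2/3$ and $\mathfrak{d}(V_3')/\mathfrak{d}(V_3)=1/4$), both require $\mathfrak{d}(V_2')=2/5$ and $\mathfrak{d}(V_3')=1/5$. You should trust your corrected count rather than reverse-engineer toward the printed $1/5$.

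The $V_3'$ discussion is where the proposal genuinely breaks down. For every prime $p$, including $p=3$, one has $p\mid D$ if and only if $F\bmod p$ has a repeated linear factor, i.e.\ $(F,p)\in\{(1^21),(1^3)\}$; there is no ``unramified but $3\mid D$'' locus, so the proposed correction of adding contributions from types $(111)$, $(12)$, $(3)$ cannot produce anything, and certainly cannot raise the density to $2/5$. Moreover $V_3'$ is not all of $T_3(1^21)$ either, since some forms of type $(1^21)$ have $9\mid D$ and are excluded by $V_3$, so $3/10$ is an overcount. The clean route, which is the paper's, is complementary counting inside $V_3$: since $3\nmid D$ already implies $9\nmid D$, one has $V_3\setminus V_3'=T_3(111)\cup T_3(12)\cup T_3(3)$, whence $\mathfrak{d}(V_3')=\mathfrak{d}(V_3)-\tfrac{3\cdot 2}{3^2+1}=\tfrac{8}{10}-\tfrac{6}{10}=\tfrac15$. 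As written, your argument does not close for either prime.
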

\begin{proof}
    Notice that $V_2'$ consists of all $F \in \Phi$ for which $2 \nmid D$. This means $V_2'= \bigcup_{\alpha} T_2(\alpha)$, where $\alpha \in A:=\{(111),(12),(3)\}$.
   Therefore,
   \begin{equation*}
       \begin{split}
           \mathfrak{d}(V_2') & = \sum_{\alpha \in A} \mathfrak{d}(T_2(\alpha))\\
            & =  \left( \frac{1}{6}\frac{2(2-1)}{(2^2+1)} + \frac{1}{2}\frac{2(2-1)}{(2^2+1)} + \frac{1}{3}\frac{2(2-1)}{(2^2+1)} \right)\\
            & = \frac{2}{5}.
       \end{split}
   \end{equation*}
   Whereas for the prime $p=3$, we have that $V_3 \setminus V_3'$ consists of all $F \in \Phi$ for which $3 \nmid D$. This means $V_3 \setminus V_3'= \bigcup_{\alpha} T_3(\alpha)$, where $\alpha \in A:=\{(111),(12),(3)\}$. Then we have 
    \begin{equation*}
        \begin{split}
            \mathfrak{d}(V_3') & = \mathfrak{d}(V_3)-\sum_{\alpha \in A} \mathfrak{d}(T_3(\alpha))\\
            & = \frac{(3^2-1)}{(3^2+1)} - \left( \frac{1}{6}\frac{3(3-1)}{(3^2+1)} + \frac{1}{2}\frac{3(3-1)}{(3^2+1)} + \frac{1}{3}\frac{3(3-1)}{(3^2+1)} \right)\\
            & = \frac{(3-1)}{(3^2+1)}\\
            & = \frac{1}{5}.
        \end{split}
    \end{equation*}
\end{proof}

Given a cubic field $K$, we define a map $\Lambda: K \to F_K$
from the conjugate classes of cubic fields by $K \mapsto F_K(x,y)$, where $F_K(x,y)$ is the integral binary cubic form.
\begin{proposition}\label{bijection to U}
    The map $\Lambda$ defined above is a discriminant preserving bijection from the set of conjugacy classes of cubic fields to $U$.
\end{proposition}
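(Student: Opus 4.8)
The plan is to realize $\Lambda$ as a restriction of the classical Delone--Faddeev correspondence between integral binary cubic forms and cubic rings, and then to match the relevant local maximality conditions against the sets $U_p$. First I would verify that $\Lambda$ is well defined and lands in $\Phi$. Writing $\alpha = \omega x + \nu y$ and letting $\alpha^{(1)},\alpha^{(2)},\alpha^{(3)}$ be its conjugates, one has $\mathfrak{d}(\alpha)=\prod_{i<j}(\alpha^{(i)}-\alpha^{(j)})^2$, so that $F_K=\prod_{i<j}(\alpha^{(i)}-\alpha^{(j)})/\sqrt{\Delta_K}$ is a binary form of degree $3$ in $x,y$; a direct check shows its coefficients are rational integers and that it is primitive and irreducible (irreducibility reflecting that $K$ is a field). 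Replacing the integral basis $1,\omega,\nu$ by another basis containing $1$ alters $(\omega,\nu)$ by an element of $\op{GL}_2(\Z)$ together with a translation by $\Z\cdot 1$; the translation cancels in the differences $\alpha^{(i)}-\alpha^{(j)}$, while the $\op{GL}_2(\Z)$-part transforms $F_K$ within its equivalence class, so $\Lambda$ is a well-defined map from conjugacy classes of cubic fields into $\Phi$. The discriminant-preserving property $\op{disc}(F_K)=\Delta_K$ then follows by expanding $\mathfrak{d}(\alpha)$ and comparing with the formula for $D$, as in \cite{davenport1}.

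Next I would invoke the form--ring correspondence of \cite{davenportheilbronn2}: equivalence classes of integral binary cubic forms are in discriminant-preserving bijection with isomorphism classes of cubic rings, with irreducible forms corresponding to orders in cubic fields. Under this correspondence the form $F_K$ attached to the maximal order $\cO_K$ corresponds to $\cO_K$ itself. This already gives injectivity of $\Lambda$: if $F_K$ and $F_{K'}$ are equivalent, then the associated rings $\cO_K$ and $\cO_{K'}$ are isomorphic, forcing $K$ and $K'$ to be conjugate.

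The crux is to identify the image of $\Lambda$ with $U$, i.e. to show that an irreducible $F\in\Phi$ corresponds to a maximal order if and only if $F\in U_p$ for every prime $p$. For $p>2$, when $p^2\nmid D$ (the condition $V_p\subseteq U_p$) one has $p\nmid[\cO_K:R]$ from $\op{disc}(R)=[\cO_K:R]^2\Delta_K$, so the associated ring is automatically maximal at $p$; when $p^2\mid D$, the factorization dictionary of Lemma \ref{ramification T_p lemma} shows that maximality at $p$ can only fail in the totally ramified case $(F,p)=(1^3)$, and the supplementary congruence condition in the definition of $U_p$ --- solubility of $F(x,y)\equiv ep\pmod{p^2}$ with $e\not\equiv 0\pmod p$ --- is exactly the criterion detecting $p$-maximality. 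The prime $p=2$ requires the analogous analysis modulo $2^4$, accounting for the different definition of $V_2$. This prime-by-prime local analysis is the main obstacle, and is precisely the content of the maximality lemmas underlying \cite[Lemmas 4, 5]{davenportheilbronn2}.

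Finally, surjectivity onto $U$ follows by reversing the correspondence: given an irreducible $F\in U$, the associated cubic ring is maximal at every prime, hence equals the maximal order $\cO_K$ of a cubic field $K$, which is a field by irreducibility of $F$; then $F$ is equivalent to $F_K$. Combining surjectivity with the injectivity established above and the identity $\op{disc}(F_K)=\Delta_K$ shows that $\Lambda$ is a discriminant-preserving bijection from conjugacy classes of cubic fields onto $U$.
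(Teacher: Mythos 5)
The paper does not actually prove this proposition---its ``proof'' is the single line ``The result is \cite[Proposition 4]{davenportheilbronn2}''---so the real comparison is between your argument and Davenport--Heilbronn's. Your reconstruction via the form--ring (Delone--Faddeev) correspondence is the standard modern route and its architecture is sound: integrality, primitivity and well-definedness of $F_K$ up to $\op{GL}_2(\Z)$-equivalence; discriminant preservation; injectivity from the ring correspondence; and identification of the image with $U$ by matching $U_p$-membership against $p$-maximality of the associated cubic ring. What this buys over the paper's bare citation is an explanation of \emph{why} the set $U$, and in particular the auxiliary congruence $F(x,y)\equiv ep\pmod{p^2}$, is the right target. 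What you still defer to the literature is precisely the hard part, namely the equivalence of $U_p$-membership with $p$-maximality; note also that \cite[Lemmas 4, 5]{davenportheilbronn2} are the \emph{density} computations for $U_p$ and $V_p$, not the maximality criteria, which are established elsewhere in that paper.

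One sentence of your local analysis is stated backwards. When $p^2\mid D$ (with $p$ odd) it is not true that ``maximality at $p$ can only fail in the totally ramified case $(F,p)=(1^3)$'': if $p\mid D$ then necessarily $(F,p)\in\{(1^2 1),(1^3)\}$, and in the case $(F,p)=(1^2 1)$ with $p^2\mid D$ the associated ring is \emph{always} non-maximal at $p$, since a maximal order with tame ramification $(p)=\p_1^2\p_2$ has $\upsilon_p(\Delta_K)=1$. The correct dichotomy is that maximality can only \emph{survive} $p^2\mid D$ in the totally ramified case, and there it holds exactly when the congruence condition is satisfied. This is a local misstatement rather than a structural gap---the definition of $U_p$ you are matching against already encodes the right condition---but as written the sentence asserts the wrong implication and should be corrected.
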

\begin{proof}
    The result is \cite[Proposition 4]{davenportheilbronn2}.
\end{proof}
Given a quadratic number field $k$, set $h_3^*(k)$ to be the order of the $3$-torsion subgroup of the class group of $k$. Let $\Delta_2$ be the discriminant of a quadratic field, set $h_3^*(\Delta_2):=h_3^*\left(\Q(\sqrt{\Delta_2})\right)$.
\begin{lemma}\label{h_3 star lemma}
    For real numbers $\xi < \eta$, we have \begin{equation}\label{h_3 star formula}\frac{1}{2} \sum_{\substack{\xi < \Delta_2 < \eta \\ 2 \nmid \Delta_2, 3 \mid \Delta_2}} (h_3^*(\Delta_2)-1) = N(\xi, \eta, V'),\end{equation}
    where $\Delta_2$ ranges over discriminants of quadratic number fields.
\end{lemma}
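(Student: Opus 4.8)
The plan is to identify the forms in $V'$, via the Davenport--Heilbronn bijection $\Lambda$ of Proposition \ref{bijection to U}, with conjugacy classes of cubic fields whose discriminant is an odd fundamental discriminant divisible by $3$, and then to count such cubic fields by class field theory.

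First I would record that $V' \subseteq V \subseteq U$: indeed $V_2' \subseteq V_2$, $V_3' \subseteq V_3$ and $V_p' = V_p$ for $p \ge 5$, while $V_p \subseteq U_p$ for every $p$ by the definition of $U_p$. Hence $\Lambda$ restricts to an injection of $\Lambda^{-1}(V')$ into the conjugacy classes of cubic fields, sending a class $K$ to $F_K$, and whether $F_K \in V'$ depends only on the discriminant $D = \Delta_K$. Unwinding the definitions, $F \in V'$ forces $D$ to be odd, to satisfy $3 \mid D$ and $9 \nmid D$, and to satisfy $p^2 \nmid D$ for all $p \ge 5$; since $D$ is a cubic field discriminant it satisfies $D \equiv 0,1 \pmod 4$, so $2 \nmid D$ forces $D \equiv 1 \pmod 4$. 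Thus $F_K \in V'$ precisely when $\Delta_K$ is an odd, squarefree integer congruent to $1$ modulo $4$ and divisible by $3$, i.e.\ an odd fundamental discriminant divisible by $3$. Because $\Lambda$ preserves discriminants, it follows that $N(\xi,\eta,V')$ equals the number of conjugacy classes of cubic fields $K$ with $\Delta_K \in (\xi,\eta)$ an odd fundamental discriminant divisible by $3$.

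Next I would count these cubic fields for a fixed such discriminant $\Delta_2$. Since $\Delta_2$ is a genuine (non-square) fundamental discriminant, any cubic field $K$ with $\Delta_K = \Delta_2$ is non-cyclic, so its Galois closure $\widetilde{K}$ is an $S_3$-extension with quadratic resolvent $k := \Q(\sqrt{\Delta_2})$, and $\Delta_K = \Delta_k$ forces $\widetilde{K}/k$ to be unramified. Conversely, unramified cyclic cubic extensions $M/k$ correspond by class field theory to index-$3$ subgroups of $\op{Cl}(k)$, of which there are $(h_3^*(\Delta_2) - 1)/2$, and each such $M$ yields exactly one conjugacy class of cubic fields. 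The essential point to verify is that each such $M$ is an $S_3$-extension of $\Q$ and not abelian: writing $\sigma$ for the nontrivial automorphism of $k/\Q$, the relative norm of any ideal is principal, so $c^{1+\sigma} = 1$ in $\op{Cl}(k)$ for every class $c$, whence $\sigma$ acts as inversion on $\op{Cl}(k)$ and by $-1$ on every cyclic cubic quotient; this inversion action is exactly what makes $\op{Gal}(M/\Q) \cong S_3$. Summing $\tfrac12(h_3^*(\Delta_2) - 1)$ over all odd fundamental discriminants $\Delta_2$ divisible by $3$ with $\xi < \Delta_2 < \eta$ then gives $N(\xi,\eta,V')$, which is the asserted identity.

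The main obstacle is the class-field-theoretic count in the third paragraph: establishing the precise dictionary between conjugacy classes of cubic fields of fundamental discriminant $\Delta_2$ and index-$3$ subgroups of $\op{Cl}(\Q(\sqrt{\Delta_2}))$, and in particular confirming the $S_3$-versus-$C_6$ dichotomy so that the count $(h_3^*(\Delta_2)-1)/2$ is exact. By comparison the first step is essentially a translation of the congruence definition of $V'$ into the local characterization of fundamental discriminants, and the reduction to cubic fields is immediate once the inclusion $V' \subseteq U$ is in hand.
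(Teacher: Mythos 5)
Your argument is correct and follows essentially the same route as the paper's proof: both rest on the Davenport--Heilbronn bijection $\Lambda$ of Proposition \ref{bijection to U} restricted to $V'$, together with the class-field-theoretic correspondence between unramified cyclic cubic extensions of $\Q(\sqrt{\Delta_2})$ and the $\tfrac12(h_3^*(\Delta_2)-1)$ index-$3$ subgroups of the class group. You merely run the dictionary in the opposite direction (forms $\to$ cubic fields $\to$ class groups rather than class groups $\to$ cubic fields $\to$ forms) and spell out a couple of details the paper leaves implicit, such as the $S_3$-versus-$C_6$ dichotomy.
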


\begin{proof}
    The proof is similar to that of \cite[Theorem 3]{davenportheilbronn2}. We first analyze the left hand side \eqref{h_3 star formula}. Let $K$ be a cubic field in which no prime ramifies completely. Let $\widetilde{K}$ be the Galois closure of $K$ over $\Q$. Since no prime splits completely in $K$, it follows that $\widetilde{K}$ is an $S_3$-extension of $\Q$. Let $k$ be the quadratic extension of $\Q$ which is contained in $\widetilde{K}$. Then we have that $\Delta_K=\Delta_2$, where $\Delta_2$ is the discriminant of $k$, see \cite[p.419, l.15 and proof of Lemma 12]{davenportheilbronn2}. Let $p$ be prime of $\Q$ and $\mathcal{P}|p$ be a prime of $\widetilde{K}$. Then the ramification index $e(\mathcal{P}/p)$ is $1$, $2$ or $4$. In particular, it is not divisible by $3$. Therefore, any prime of $k$ is unramified in $\widetilde{K}$. Hence, $\widetilde{K}$ is contained in the Hilbert class field of $k=\Q(\sqrt{\Delta_2})$. Thus, $\op{Gal}(\widetilde{K}/k)$ can be viewed as a $\Z/3\Z$ quotient of the class group of $k$. There are $\frac{1}{2}\left(h_3^*(\Delta_2)-1\right)$ such $\Z/3\Z$-quotients. This implies that the number of conjugate triplets of cubic fields $K$ in which no prime ramifies completely is given by $\frac{1}{2}(h_3^*(\Delta_2)-1)$. We have thus shown that 
    \begin{equation}\label{N' eqn}\frac{1}{2} \sum_{\substack{\xi < \Delta_2 < \eta \\ 2 \nmid \Delta_2, 3\mid \Delta_2}} (h_3^*(\Delta_2)-1) = N_3'(\xi, \eta),\end{equation} where $N_3'(\xi, \eta)$ is the number of conjugate triples of cubic fields $K/\Q$ in which no prime is totally ramified and such that $2 \nmid \Delta_K$ and $3\mid \Delta_K$.
    \par In order to complete the proof, it suffices to show that $N_3'(\xi, \eta)=N(\xi, \eta, V')$. Recall from Proposition \ref{bijection to U} that there is a bijection
    \[\Lambda: \{K/\Q \mid [K:\Q]=3\}/_{\simeq}\xrightarrow{\sim} U,\] between conjugacy classes of cubic extensions and $U$, defined by $\Lambda(K):=F_K(x,y)$. Lemma \ref{ramification T_p lemma} implies that a prime $p$ is not totally ramified in $K$ if and only if $\Lambda(K)\notin T_p(1^3)$. Note that $V_p=U_p\backslash T_p(1^3)$. On the other hand, $2 \nmid \Delta_2$ (resp. $3\mid \Delta_2$) if and only if $2 \nmid \Delta_K$ (resp. $3\mid \Delta_K$), since $\Delta_K=\Delta_2$. Hence, $\Lambda$ restricts to a discriminant preserving bijection 
    \begin{equation}\label{restricted bijection}\Lambda: \{K/\Q \mid [K:\Q]=3, \text{ no prime is totally ramified in }K\text{, }2 \nmid \Delta_K, 3 \mid \Delta_K\}/_{\simeq}\xrightarrow{\sim} V'.\end{equation}
    From \eqref{N' eqn} and \eqref{restricted bijection}, we deduce that
    \[\frac{1}{2} \sum_{\substack{\xi < \Delta_2 < \eta \\2 \nmid \Delta_2, 3\mid \Delta_2}} (h_3^*(\Delta_2)-1)=N'_3(\xi, \eta)=N(\xi, \eta, V'),\] which completes the proof.
 \end{proof}

 \begin{proposition}\label{boring prop 1}
     With respect to notation above, we have that
     \begin{enumerate}
         \item $\lim_{X\rightarrow \infty}\left(\frac{N(0, X, V')}{X}\right)=\frac{1}{12\pi^2}$,
          \item $\lim_{X\rightarrow \infty}\left(\frac{N(-X, 0, V')}{X}\right)=\frac{1}{4 \pi^2}$.
     \end{enumerate}
 \end{proposition}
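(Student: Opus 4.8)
The plan is to realise $V'=\bigcap_p V_p'$ as a set defined by local congruence conditions at every prime, to compute its density $\mathfrak{d}(V')$ as an absolutely convergent Euler product $\prod_p\mathfrak{d}(V_p')$, and then to feed this density into the theorem of Davenport recalled above. Since each $V_p'$ is cut out by congruence conditions on the coefficients modulo a power of $p$ (modulo $p^2$ for $p\ge 3$ and modulo $2^4$ for $p=2$, as recorded after Definition \ref{def of Up and Vp}), the Chinese Remainder Theorem shows that for the truncation ${V'}^{(Y)}:=\bigcap_{p\le Y}V_p'$, which is defined modulo $m_Y:=\prod_{p\le Y}p^{r_p}$, the counting density factorizes as $\mathfrak{d}\big({V'}^{(Y)}\big)=\prod_{p\le Y}\mathfrak{d}(V_p')$; this is immediate from the definition $\mathfrak{d}(S)=\#S(m)/\#\Phi(m)$ together with $\#\Phi(m)=\prod_{p\mid m}\#\Phi(p^{r_p})$.

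I would then evaluate the Euler product. Using the already established values $\mathfrak{d}(V_2')=\tfrac15$ and $\mathfrak{d}(V_3')=\tfrac25$, together with $\mathfrak{d}(V_p')=\mathfrak{d}(V_p)=(p^2-1)(p^2+1)^{-1}$ for $p\ge5$, I first record the auxiliary product over \emph{all} primes,
\[
\prod_p\frac{p^2-1}{p^2+1}=\frac{\prod_p(1-p^{-2})}{\prod_p(1+p^{-2})}=\frac{\zeta(2)^{-1}}{\zeta(2)\zeta(4)^{-1}}=\frac{\zeta(4)}{\zeta(2)^2}=\frac{2}{5},
\]
where I used $\prod_p(1-p^{-2})=\zeta(2)^{-1}$, $\prod_p(1+p^{-2})=\zeta(2)\zeta(4)^{-1}$, and $\zeta(2)=\pi^2/6$, $\zeta(4)=\pi^4/90$. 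Dividing out the factors $\tfrac{2^2-1}{2^2+1}=\tfrac35$ and $\tfrac{3^2-1}{3^2+1}=\tfrac45$ at $p=2,3$ gives $\prod_{p\ge5}\frac{p^2-1}{p^2+1}=\frac{2/5}{(3/5)(4/5)}=\frac56$, and inserting the genuine densities at the small primes yields
\[
\mathfrak{d}(V')=\mathfrak{d}(V_2')\,\mathfrak{d}(V_3')\prod_{p\ge5}\mathfrak{d}(V_p')=\frac15\cdot\frac25\cdot\frac56=\frac{1}{15}.
\]
The product converges absolutely because $1-\mathfrak{d}(V_p')=2(p^2+1)^{-1}=O(p^{-2})$.

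The main obstacle is the interchange of limits: Davenport's theorem applies to a set defined by congruence conditions modulo a single integer, so it yields the asymptotics only for each finite truncation ${V'}^{(Y)}$, and its statement carries no control on how the error depends on the modulus $m_Y$. What is needed is a bound, \emph{uniform in $X$}, on the number of forms with $|D|<X$ lying in ${V'}^{(Y)}$ but not in $V'$; this is exactly the sieve estimate of Proposition \ref{prop Phi asymptotic}. Concretely, such a form must fail $V_p'$ for some $p>Y$, and for $p\ge5$ failing $V_p$ forces $p^2\mid D$, so the number of offending forms is $\ll X\sum_{p>Y}(1-\mathfrak{d}(V_p))\ll X\sum_{p>Y}p^{-2}$, which tends to $0$ relative to $X$ as $Y\to\infty$. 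This uniform tail bound licenses passing to the limit $Y\to\infty$ after applying Davenport's theorem for each fixed $Y$, giving
\[
\lim_{X\to\infty}\frac{N(0,X,V')}{X}=\frac54\pi^{-2}\,\mathfrak{d}(V'),\qquad \lim_{X\to\infty}\frac{N(-X,0,V')}{X}=\frac{15}4\pi^{-2}\,\mathfrak{d}(V').
\]

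Finally, substituting $\mathfrak{d}(V')=\tfrac1{15}$ gives $\tfrac54\pi^{-2}\cdot\tfrac1{15}=\tfrac1{12\pi^2}$ and $\tfrac{15}4\pi^{-2}\cdot\tfrac1{15}=\tfrac1{4\pi^2}$, which are the two claimed limits.
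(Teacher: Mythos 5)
Your proof is correct and follows essentially the same route as the paper: both compute the density $\mathfrak{d}(V')=\prod_p\mathfrak{d}(V_p')$ and feed it into Davenport's theorem, with the passage from finitely many to infinitely many local conditions justified by the Davenport--Heilbronn tail estimate (the paper organizes the Euler product as the correction factors $\mathfrak{d}(V_2')/\mathfrak{d}(V_2)$ and $\mathfrak{d}(V_3')/\mathfrak{d}(V_3)$ applied to the known limit $\tfrac{1}{2\pi^2}$ for $V$, which is the same computation as your $\mathfrak{d}(V')=\tfrac{1}{15}$). One small remark: the values $\mathfrak{d}(V_2')=\tfrac15$, $\mathfrak{d}(V_3')=\tfrac25$ you quote are those in the \emph{statement} of the preceding proposition, which appear to be swapped relative to its own proof (which yields $\tfrac25$ at $p=2$ and $\tfrac15$ at $p=3$); since only the product $\tfrac{2}{25}$ enters, your final answer is unaffected.
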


\begin{proof}
    It follows from arguments identical to the proof of \cite[Proposition 3]{davenportheilbronn2} that 
    \[\begin{split}
         \lim_{X\rightarrow \infty}\left(\frac{N(0, X, V')}{X}\right) &= \mathfrak{d}(V_2')\times \mathfrak{d}(V_3')\times \left(\prod_{p \neq 2,3} \mathfrak{d}(V_p, p^2)\right)\times  \frac{5}{4 \pi^2}\\
         & = \frac{\mathfrak{d}(V_2', 2^2)}{\mathfrak{d}(V_2, 2^2)} \times \frac{\mathfrak{d}(V_3', 3^2)}{\mathfrak{d}(V_3, 3^2)} \times \frac{1}{2 \pi ^2},\\
          \lim_{X\rightarrow \infty}\left(\frac{N(-X, 0, V')}{X}\right) &=\mathfrak{d}(V_2', 2^2) \times \mathfrak{d}(V_3', 3^2)\times \left(\prod_{p\neq 2,3} \mathfrak{d}(V_p, p^2)\right)\times  \frac{15}{4 \pi^2},\\
           & = \frac{\mathfrak{d}(V_2', 2^2)}{\mathfrak{d}(V_2, 2^2)}\times \frac{\mathfrak{d}(V_3', 3^2)}{\mathfrak{d}(V_3, 3^2)}\times \frac{3}{2 \pi ^2}.\\
    \end{split}\]

    Noting that
\begin{equation*}
    \begin{split}
    & \frac{\mathfrak{d}(V_2', 2^2)}{\mathfrak{d}(V_2, 2^2)}=\left(\frac{2}{5}\right)\times \left(\frac{2^2-1}{2^2+1}\right)^{-1}=\frac{2}{3},\\
    & \frac{\mathfrak{d}(V_3', 3^2)}{\mathfrak{d}(V_3, 3^2)}=\left(\frac{1}{5}\right)\times \left(\frac{3^2-1}{3^2+1}\right)^{-1}=\frac{1}{4},  
    \end{split}
\end{equation*}
    we obtain the result.
\end{proof}
It conveniences us to prove a general result about the density of a set of integers defined by congruence conditions at every prime $p$. When there are only finitely many primes at which local conditions are defined, the density is easy to work out, and is a consequence of the Chinese remainder theorem. When there are infinitely many conditions, we require that some additional conditions are satisfied, which we now describe in greater detail.
\par Let $c,d$ be real numbers such that $c<d$ and $\Omega(X)$ be the set of integers $a\in (c X, d X)$. At each prime $p$, we consider a local condition $\Phi_p$ defined by a finite set of congruence classes $\{\phi_1^{(p)}, \dots, \phi_t^{(p)}\}$ modulo $p^{r_p}$. Setting \[\mathfrak{d}(\Phi_p):=\frac{t}{p^{r_p}},\]say that $a\in \Phi_p$ if $a\mod{p^{r_p}}$ belongs to the set $\{\phi_1^{(p)}, \dots, \phi_t^{(p)}\}$. We define $\Phi_p(X)$ to be the set of $a\in \Omega(X)$ such that $a$ satisfies $\Phi_p$. Let $\Phi(X)$ be the set of $a\in \Omega(X)$ satisfying the conditions $\Phi_p$ for all primes $p$. In other words, 
\[\Phi(X):=\bigcap_p \Phi_p(X),\] where $p$ ranges over all prime numbers.

\begin{proposition}\label{prop Phi asymptotic}
    With respect to notation above, suppose that 
    \begin{enumerate}
        \item $\sum_{p} \left(1-\mathfrak{d}(\Phi_p)\right)<\infty$,
        \item the product $\prod_p \mathfrak{d}(\Phi_p)$ converges.
    \end{enumerate}
     Then, we have that 
    \[\# \Phi(X)\sim (d-c)\left(\prod_p \mathfrak{d}(\Phi_p)\right) X.\]
\end{proposition}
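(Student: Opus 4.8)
The plan is to compare $\#\Phi(X)$ with the truncated count obtained by imposing only the conditions $\Phi_p$ for primes $p \leq Y$, and then control the tail by a sieve (Eratosthenes--Legendre) argument, letting $Y \to \infty$ slowly relative to $X$. First I would fix a truncation parameter $Y$ and let $m_Y := \prod_{p \leq Y} p^{r_p}$. The conditions $\{\Phi_p : p \leq Y\}$ are finitely many congruence conditions to coprime prime-power moduli, so by the Chinese Remainder Theorem they cut out a union of $\prod_{p \leq Y} t_p$ residue classes modulo $m_Y$, where $t_p := p^{r_p}\mathfrak{d}(\Phi_p)$. Counting integers in the interval $(cX, dX)$ lying in a fixed residue class modulo $m_Y$ gives $(d-c)X/m_Y + O(1)$ per class, so the count $\Phi^{\leq Y}(X)$ of $a \in \Omega(X)$ satisfying all conditions for $p \leq Y$ satisfies
\[
\Phi^{\leq Y}(X) = (d-c)\left(\prod_{p \leq Y}\mathfrak{d}(\Phi_p)\right)X + O(m_Y),
\]
where the error term absorbs the $O(1)$ over the $\prod_{p\le Y} t_p \le m_Y$ residue classes.

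Next I would bound the discrepancy $0 \leq \Phi^{\leq Y}(X) - \#\Phi(X)$, which counts those $a \in \Omega(X)$ passing all tests for $p \leq Y$ but failing some test for a prime $p > Y$. By the union bound, this is at most $\sum_{p > Y} \#\{a \in \Omega(X) : a \notin \Phi_p\}$. For each such prime the number of $a \in (cX, dX)$ violating $\Phi_p$ is at most $(1 - \mathfrak{d}(\Phi_p))\big((d-c)X + p^{r_p}\big) \le (1-\mathfrak{d}(\Phi_p))(d-c)X + p^{r_p}$. Summing over $p > Y$ and using hypothesis (1), the main part is $(d-c)X \sum_{p > Y}(1 - \mathfrak{d}(\Phi_p))$, which is the tail of a convergent series and hence is $o(1) \cdot X$ as $Y \to \infty$, uniformly in $X$. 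The contribution of the $\sum_{p>Y} p^{r_p}$ terms must be kept small; this is where the truncation $Y$ must be chosen carefully as a function of $X$.

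Combining the two estimates yields, for any fixed $Y$,
\[
\#\Phi(X) = (d-c)\left(\prod_{p \leq Y}\mathfrak{d}(\Phi_p)\right)X + O(m_Y) + O\!\left((d-c)X\sum_{p>Y}(1-\mathfrak{d}(\Phi_p))\right).
\]
By hypothesis (2) the finite product $\prod_{p \leq Y}\mathfrak{d}(\Phi_p)$ converges to $\prod_p \mathfrak{d}(\Phi_p)$ as $Y \to \infty$, and by hypothesis (1) the tail sum $\sum_{p>Y}(1-\mathfrak{d}(\Phi_p))$ tends to $0$. Dividing through by $X$ and taking $\limsup$ and $\liminf$ as $X \to \infty$ with $Y$ fixed, the $O(m_Y)/X$ term vanishes and one obtains bounds that pinch to $(d-c)\prod_p \mathfrak{d}(\Phi_p)$ upon letting $Y \to \infty$. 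This diagonal limit argument gives $\#\Phi(X) \sim (d-c)\left(\prod_p \mathfrak{d}(\Phi_p)\right)X$.

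The main obstacle is making the interchange of the two limits rigorous: the error term $O(m_Y)$ grows with $Y$ while the tail error shrinks, so one cannot send $Y \to \infty$ simultaneously with $X$ without a quantitative coupling. The clean resolution is the \emph{fixed-$Y$-then-$X$} order described above, using that hypothesis (1) gives a tail bound uniform in $X$, so that $\limsup_X$ and $\liminf_X$ of $\#\Phi(X)/X$ both lie within $(d-c)\sum_{p>Y}(1-\mathfrak{d}(\Phi_p))$ of the target for every $Y$; letting $Y \to \infty$ then forces equality. I expect verifying this uniformity of the tail bound — and checking that the absolute convergence in hypothesis (1) indeed controls the multiplicative error in hypothesis (2) — to be the only genuinely delicate point, the remaining steps being routine applications of CRT and interval counting.
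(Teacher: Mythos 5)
Your proposal follows the same route as the paper's own proof: truncate at a parameter $Y$, compute the main term for the finitely many conditions at $p\le Y$ via the Chinese Remainder Theorem, bound the discrepancy $\Phi^{\le Y}(X)-\#\Phi(X)$ by a union bound over $p>Y$, and take $X\to\infty$ before $Y\to\infty$. The one point you single out as delicate --- the secondary error coming from the $O(1)$-per-residue-class terms, which you record as $\sum_{p>Y}p^{r_p}$ --- is indeed the crux, but your proposed resolution (simply fixing $Y$ and then letting $X\to\infty$) does not dispose of it. With $Y$ fixed, the number of primes $p>Y$ for which $\Phi_p'$ meets $(cX,dX)$ grows with $X$, and the per-prime error is $O(s_p)$ with $s_p:=p^{r_p}\left(1-\mathfrak{d}(\Phi_p)\right)$ the number of excluded classes; the stated hypotheses do not control $\sum_{p>Y}s_p$ over the relevant range. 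This is not a defect of your write-up alone: the paper's proof elides the same point, recording the error as $O(1)$ and then summing $\limsup_{X}$ over the infinitely many $p>Z$ term by term, an interchange of $\limsup$ with an infinite sum that is not valid in general. In fact, with adversarially placed excluded classes to enormous moduli $p^{r_p}$ one can arrange $\Phi(X)=\emptyset$ for every $X$ while both hypotheses (1) and (2) hold, so some supplementary input really is required.

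The gap closes in all of the applications in this article because there the condition at each $p\ge 5$ is $p^2\nmid a$ (and the conditions at $2$ and $3$ are finite in number): a single excluded class per large prime gives $\#\Phi_p'(X)\le (d-c)X p^{-2}+1$ and $\Phi_p'(X)=\emptyset$ unless $p\ll X^{1/2}$, whence $\sum_{p>Y}\#\Phi_p'(X)\le (d-c)X\sum_{p>Y}p^{-2}+O(X^{1/2})$, which after dividing by $X$ and letting $Y\to\infty$ contributes nothing. If you add to your argument the hypothesis that the number of excluded classes per prime is uniformly bounded (or restrict to squarefree-type conditions as above), your union-bound estimate becomes uniform in $X$ in the way you assert, and the fixed-$Y$-then-$X$ pinching argument goes through exactly as in the paper.
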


\begin{proof}
Given a positive real number $Z$, set $\Phi_{Z}(X)$ to be the set of integers $a\in \Omega(X)$ such that $a$ satisfies $\Phi_p$ for all $p\leq Z$. Since there are only finitely many local conditions defining $\Phi_Z(X)$, we have that 
 \[\lim_{Y\rightarrow \infty} \frac{\# \Phi_Z(X)}{(d-c)X}=\prod_{p\leq Z} \mathfrak{d}(\Phi_p).\]
 On the other hand, since $\Phi(X)$ is contained in $\Phi_Z(X)$, we get that 
 \[\limsup_{X\rightarrow \infty} \frac{\# \Phi(X)}{(d-c) X}\leq \lim_{X\rightarrow \infty} \frac{\# \Phi_Z(X)}{(d-c)X}=\prod_{p\leq Z} \mathfrak{d}(\Phi_p).\] Thus, taking $Z\rightarrow \infty$, we find that 
 \[\limsup_{X\rightarrow \infty} \frac{\# \Phi(X)}{(d-c)X} \leq \prod_{p} \mathfrak{d}(\Phi_p).\]
Given a prime $p$, set $\Phi_p'(X)$ to be the set of integers $a\in \Omega(X)$ such that $a$ does not satisfy $\Phi_p$. Since \[\Phi_Z(X)\subseteq \Phi(X)\cup \left(\bigcup_{p>Z } \Phi_p'(X)\right),\] we find that 
\[\lim_{X\rightarrow \infty} \frac{\# \Phi_Z(X)}{(d-c)X}\leq \liminf_{X\rightarrow \infty} \frac{\# \Phi(X)}{(d-c)X}+ \sum_{p>Z} \limsup_{Y\rightarrow \infty} \frac{\# \Phi_p'(X)}{(d-c)X}.\]
Note that $\# \Phi_p'(X)= \left(1-\mathfrak{d}(\Phi_p) \right) (d-c)X +O(1)$. Thus, we find that 
\[\sum_{p>Z} \limsup_{X\rightarrow \infty} \frac{\# \Phi_p'(X)}{(d-c)X}\leq \sum_{p\geq Z}  \left(1-\mathfrak{d}(\Phi_p)\right).\] This quantity goes to $0$ as $Z\rightarrow \infty$. On the other hand, 
\[\lim_{Z\rightarrow \infty}\left(\lim_{X\rightarrow \infty} \frac{\# \Phi_Z(X)}{(d-c)X}\right)=\prod_p \mathfrak{d}(\Phi_p).\]
 Thus, we have shown that 
 \[\liminf_{X\rightarrow \infty} \frac{\# \Phi(X)}{(d-c) X}=\prod_p \mathfrak{d}(\Phi_p).\]     

 Since $\liminf_{X\rightarrow \infty} \frac{\# \Phi(X)}{(d-c) X}$ and $\limsup_{X\rightarrow \infty} \frac{\# \Phi(X)}{(d-c) X}$ both exist and are equal to $\prod_p \mathfrak{d}(\Phi_p)$, we conclude that 
  \[\# \Phi(X)\sim (d-c)\left(\prod_p \mathfrak{d}(\Phi_p)\right) X.\]
\end{proof}

\begin{definition}
    Given a positive real number $X$, let $\mathcal{M}^+(X)$ (resp. $\mathcal{M}^-(X)$) be the set of all discriminants $\Delta_2 \in (0,X)$ (resp. $\Delta_2\in (-X, 0)$) such that $2 \nmid \Delta_2$ and $3 \mid \Delta_2$. 
\end{definition}

\begin{lemma}\label{M pm asymptotic 1/2 pi^2}
    For $\mathcal{M}^{\pm}(X)$ defined as above, we have that \[\# \mathcal{M}^{\pm}(X) \sim \frac{1}{2\pi^2} X.\]
\end{lemma}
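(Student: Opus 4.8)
The plan is to recognize $\mathcal{M}^{\pm}(X)$ as a family of integers cut out by congruence conditions at every prime and then to invoke Proposition \ref{prop Phi asymptotic}. First I would recall the classification of fundamental discriminants: an integer $\Delta_2$ is the discriminant of a quadratic field with $2\nmid \Delta_2$ if and only if $\Delta_2$ is squarefree and $\Delta_2\equiv 1\pmod 4$, since every even fundamental discriminant is divisible by $4$. Hence $\mathcal{M}^+(X)$ is precisely the set of squarefree $n\in(0,X)$ with $n\equiv 1\pmod 4$ and $3\mid n$; after the substitution $n=-\Delta_2$ (equivalently, applying the proposition with $\Omega(X)=(-X,0)$ and the condition $\Delta_2\equiv 1\pmod 4$ at $p=2$), the set $\mathcal{M}^-(X)$ becomes the set of squarefree $n\in (0,X)$ with $n\equiv 3\pmod 4$ and $3\mid n$. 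In either case only the residue of $n$ modulo $4$ changes, and this does not affect the density, so the two counts will come out equal.

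Next I would encode these constraints as local conditions $\Phi_p$ in the sense preceding Proposition \ref{prop Phi asymptotic}, taking $\Omega(X)=(0,X)$, so that $c=0$ and $d=1$. At $p=2$ the condition is $n\equiv 1\pmod 4$ (resp.\ $n\equiv 3\pmod 4$), a single class modulo $2^2$, which already forces $2\nmid n$; thus $\mathfrak{d}(\Phi_2)=\tfrac14$ and squarefreeness at $2$ is automatic. At $p=3$ the conditions $3\mid n$ together with $9\nmid n$ single out the two classes $\{3,6\}$ modulo $3^2$, giving $\mathfrak{d}(\Phi_3)=\tfrac29$. For $p\ge 5$ the only constraint is $p^2\nmid n$, so $\mathfrak{d}(\Phi_p)=1-p^{-2}$. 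With these choices the set of integers satisfying all the $\Phi_p$ is exactly the set of squarefree integers with the prescribed residue modulo $4$ and divisible by $3$, that is, $\mathcal{M}^{\pm}(X)$.

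It then remains to verify the hypotheses of Proposition \ref{prop Phi asymptotic} and to evaluate the resulting Euler product. Hypothesis (1) holds because $\sum_p\big(1-\mathfrak{d}(\Phi_p)\big)=\tfrac34+\tfrac79+\sum_{p\ge 5}p^{-2}<\infty$, and hypothesis (2) holds since $\prod_p\mathfrak{d}(\Phi_p)$ differs from $\prod_p(1-p^{-2})$ by finitely many factors and hence converges. Finally I would compute
\[
\prod_p\mathfrak{d}(\Phi_p)=\frac14\cdot\frac29\cdot\prod_{p\ge 5}\big(1-p^{-2}\big)
=\frac14\cdot\frac29\cdot\frac{1}{\zeta(2)}\Big(1-\tfrac14\Big)^{-1}\Big(1-\tfrac19\Big)^{-1}
=\frac14\cdot\frac29\cdot\frac{9}{\pi^2}=\frac{1}{2\pi^2},
\]
using $\zeta(2)^{-1}=6/\pi^2$. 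Proposition \ref{prop Phi asymptotic} then yields $\#\mathcal{M}^{\pm}(X)\sim (d-c)\big(\prod_p\mathfrak{d}(\Phi_p)\big)X=\tfrac{1}{2\pi^2}X$, as claimed.

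I expect the only genuinely delicate step to be the bookkeeping at the small primes $2$ and $3$: one must check that the $\equiv 1\pmod 4$ condition simultaneously pins down the prime $2$ and subsumes squarefreeness there, and that imposing $3\mid n$ alongside squarefreeness produces the local density $\tfrac29$ rather than $\tfrac13$. Establishing the dictionary between genuine fundamental discriminants and integers meeting all the $\Phi_p$ (which is a bijection up to finitely many irrelevant boundary values, the smallest contributing discriminants being $\Delta_2=21$ and $\Delta_2=-3$) is the conceptual crux; once that dictionary is in place, the convergence checks and the Euler product evaluation are routine.
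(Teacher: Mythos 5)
Your proposal is correct and follows essentially the same route as the paper: both translate the definition of $\mathcal{M}^{\pm}(X)$ into local congruence conditions with densities $\tfrac14$ at $2$, $\tfrac29$ at $3$, and $1-p^{-2}$ for $p\geq 5$, and then apply Proposition \ref{prop Phi asymptotic} to evaluate the same Euler product. Your explicit verification of the hypotheses of that proposition and your careful identification of the residue class modulo $4$ for negative versus positive fundamental discriminants are details the paper leaves implicit, but the argument is the same.
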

\begin{proof}
    Note that $\Delta_2\in (-X, 0)$ satisfies $\mathcal{M}^-(X)$ if and only if
    \begin{itemize}
        \item $\Delta_2\equiv 3\pmod{4}$,
        \item $\Delta_2\equiv 3, 6\pmod{9}$,
        \item $p^2\nmid \Delta_2$ for all primes $p\geq 5$.  
        \end{itemize} 
        Let $\Phi_p$ be the congruence conditions defined as above. We find that 
        \[\mathfrak{d}(\Phi_p)=\begin{cases}
            \frac{1}{4} & \text{ if } p=2;\\
            \frac{2}{9} & \text{ if } p=3;\\
             1-\frac{1}{p^2} & \text{ if } p\geq 5. \\
        \end{cases}\]
Therefore, it follows from Proposition \ref{prop Phi asymptotic} that 
 \[ \begin{split}
   \mathcal{M}^-(X) &\sim \left(\frac{1}{4} \times \frac{2}{9} \times \prod_{p \ge 5} \left(1 - \frac{1}{p^2}\right) \right) X\\
    & = \left( \frac{1}{4} \times \frac{2}{9} \times \frac{1}{(1-\frac{1}{4})} \times \frac{1}{(1-\frac{1}{9})} \times \frac{1}{\zeta(2)} \right) X \\
    & = \left(\frac{1}{12} \times \frac{6}{\pi^2}\right) X\\
    & = \frac{X}{2\pi^2}.
    \end{split}\]
    The result for $\mathcal{M}^+(X)$ follows similarly.
\end{proof}

\begin{theorem}\label{DH modified thm}
    Let $\Delta_2$ denote a quadratic discriminant. For $\mathcal{M}^+(X)$ and $\mathcal{M}^-(X)$ defined as above, we have that
   \[ \begin{split}
        & \sum_{\Delta_2 \in \mathcal{M}^+(X)} h_3^*(\Delta_2)\sim \frac{4}{3} \sum_{\Delta_2 \in \mathcal{M}^+(X)} 1,\\
        & \sum_{\Delta_2 \in \mathcal{M}^-(X)} h_3^*(\Delta_2)\sim 2 \sum_{\Delta_2 \in \mathcal{M}^-(X)} 1.
    \end{split}\]   
\end{theorem}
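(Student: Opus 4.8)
The goal is an asymptotic formula for the average of $h_3^*(\Delta_2)$ over the thin family $\mathcal{M}^\pm(X)$. The strategy is to combine the counting lemma for $N(\xi,\eta,V')$ (Lemma \ref{h_3 star lemma}) with the explicit densities computed in Proposition \ref{boring prop 1} and the cardinality asymptotic of Lemma \ref{M pm asymptotic 1/2 pi^2}. First I would rewrite the left-hand side of Lemma \ref{h_3 star lemma} in terms of the sums appearing in the theorem. Taking $(\xi,\eta)=(0,X)$ and $(-X,0)$ respectively, and noting that the constraint ``$2\nmid\Delta_2$, $3\mid\Delta_2$'' in \eqref{h_3 star formula} is exactly the definition of $\mathcal{M}^\pm(X)$, the identity becomes
\[
\frac{1}{2}\sum_{\Delta_2\in\mathcal{M}^+(X)}\bigl(h_3^*(\Delta_2)-1\bigr)=N(0,X,V'),\qquad
\frac{1}{2}\sum_{\Delta_2\in\mathcal{M}^-(X)}\bigl(h_3^*(\Delta_2)-1\bigr)=N(-X,0,V').
\]

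**Assembling the asymptotics.** Next I would split the sum on the left as $\sum h_3^*(\Delta_2)-\sum 1$, so that
\[
\sum_{\Delta_2\in\mathcal{M}^{\pm}(X)} h_3^*(\Delta_2)=2\,N(\pm,X,V')+\#\mathcal{M}^{\pm}(X),
\]
where $N(+,X,V'):=N(0,X,V')$ and $N(-,X,V'):=N(-X,0,V')$. Now I would substitute the three known asymptotics. By Lemma \ref{M pm asymptotic 1/2 pi^2}, $\#\mathcal{M}^{\pm}(X)\sim \frac{X}{2\pi^2}$. By Proposition \ref{boring prop 1}, $N(0,X,V')\sim \frac{X}{12\pi^2}$ and $N(-X,0,V')\sim \frac{X}{4\pi^2}$. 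For the positive case this gives
\[
\sum_{\Delta_2\in\mathcal{M}^+(X)} h_3^*(\Delta_2)\sim \frac{2X}{12\pi^2}+\frac{X}{2\pi^2}
=\frac{X}{6\pi^2}+\frac{3X}{6\pi^2}=\frac{4X}{6\pi^2}=\frac{4}{3}\cdot\frac{X}{2\pi^2},
\]
and since $\sum_{\Delta_2\in\mathcal{M}^+(X)}1=\#\mathcal{M}^+(X)\sim\frac{X}{2\pi^2}$, the right-hand side is $\frac{4}{3}\sum_{\mathcal{M}^+(X)}1$, as claimed. For the negative case,
\[
\sum_{\Delta_2\in\mathcal{M}^-(X)} h_3^*(\Delta_2)\sim \frac{2X}{4\pi^2}+\frac{X}{2\pi^2}
=\frac{X}{2\pi^2}+\frac{X}{2\pi^2}=\frac{2X}{2\pi^2}=2\cdot\frac{X}{2\pi^2},
\]
which matches $2\sum_{\mathcal{M}^-(X)}1$.

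**Where the difficulty lies.** The proof itself is essentially bookkeeping: every analytic input has already been established, and the only real content is the observation that the congruence constraints in Lemma \ref{h_3 star lemma} coincide with the definition of $\mathcal{M}^\pm(X)$, so that the three lemmas plug together cleanly. The one point demanding care is the compatibility of the ranges of summation: one must check that ``$\Delta_2$ ranges over discriminants of quadratic number fields'' in Lemma \ref{h_3 star lemma} is consistent with the set $\mathcal{M}^\pm(X)$, and that the fundamental discriminant conditions ($2\nmid\Delta_2$ forcing $\Delta_2\equiv 1\bmod 4$, and $3\mid\Delta_2$) are handled uniformly on both sides, so that no spurious or omitted discriminants distort the count. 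Granting this alignment, replacing each term by its asymptotic and simplifying the resulting constants yields the two stated relations, completing the proof.
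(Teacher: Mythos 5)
Your proposal is correct and follows essentially the same route as the paper: both combine the identity of Lemma \ref{h_3 star lemma} with the asymptotics $N(0,X,V')\sim \frac{X}{12\pi^2}$, $N(-X,0,V')\sim\frac{X}{4\pi^2}$ from Proposition \ref{boring prop 1} and $\#\mathcal{M}^{\pm}(X)\sim\frac{X}{2\pi^2}$ from Lemma \ref{M pm asymptotic 1/2 pi^2}, and the arithmetic yielding $\frac{4}{3}$ and $2$ matches the paper's computation. The only cosmetic difference is that you derive the asymptotic of the numerator $\sum h_3^*(\Delta_2)$ and then divide, whereas the paper computes the ratio of the two sums directly; the content is identical.
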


\begin{proof}
We have that 
    \[\frac{1}{2} \sum_{\Delta_2 \in \mathcal{M}^+(X)} (h_3^*(\Delta_2)-1) = N(0, X, V'), \] from  Lemma \ref{h_3 star lemma} and $N(0, X,  V')\sim \frac{X}{12\pi^2}$ from Proposition \ref{boring prop 1}. Therefore, 
    \[\begin{split} & \lim_{X\rightarrow \infty}\left(\frac{\sum_{\Delta_2 \in \mathcal{M}^+(X)} h_3^*(\Delta_2)}{\sum_{\Delta_2 \in \mathcal{M}^+(X) } 1}\right)\\ = & \lim_{X\rightarrow \infty} \left(\frac{2 N(0, X, V')+ \sum_{\Delta_2 \in \mathcal{M}^+(X) } 1}{\sum_{\Delta_2 \in \mathcal{M}^+(X) } 1}\right) \\ 
    =& \lim_{X\rightarrow \infty} \left(\frac{\frac{X}{6\pi^2}+ \frac{X}{2\pi^2}}{\frac{X}{2\pi^2}}\right) =\frac{4}{3}.
 \end{split}\]

 The same argument gives 
 \[\lim_{X\rightarrow \infty}\left(\frac{\sum_{\Delta_2 \in \mathcal{M}^-(X)} h_3^*(\Delta_2)}{\sum_{\Delta_2 \in \mathcal{M}^-(X) } 1}\right)=2.\]
\end{proof}

\section{Main results}\label{s 4}

\par In this section, we prove the main results of this article. Let $a$ be a nonzero sixth power free integer and $E_a$ be the associated elliptic curve of Type I. For $Y>0$, set 
\[\mathcal{S}(Y):=\left\{ a \mid a\text{ is sixth power free and }a\leq Y\right\}.\]
Recall that 
\[\mathcal{C}_1(X):=\{a\in \Z\mid a \text{ is sixth power free and }432 a^2\leq X\}.\] It is easy to see that $\mathcal{C}_1(X)=\mathcal{S}(\frac{\sqrt{X}}{12\sqrt{3}})$. We consider the following sets 
\[\begin{split} & \mathcal{C}_{1}'(X):=\left\{ a\in \mathcal{C}_1(X)\mid \op{dim}_{\F_3} \op{Sel}^\varphi(E_a/\bK)\leq 1\right\}, \\ 
& \mathcal{S}'(Y):=\left\{ a\in \mathcal{S}(Y)\mid \op{dim}_{\F_3} \op{Sel}^\varphi(E_a/\bK)\leq 1\right\}. \\ \end{split}\]

The density of curves of Type I for which $\op{dim}_{\F_3} \op{Sel}^\varphi(E_a/\bK)\leq 1$ is defined as follows 
\[\begin{split}\mathfrak{d}_{\leq 1}=&  \lim_{X\rightarrow \infty} \left(\frac{\# \{a\in \mathcal{C}_1(X)\mid \op{dim}_{\F_3} \op{Sel}^\varphi(E_a/\bK)\leq 1\}}{ \# \mathcal{C}_1(X)}\right) \\
= &\lim_{X\rightarrow \infty} \left(\frac{\# \mathcal{C}_{1}'(X)}{ \# \mathcal{C}_1(X)}\right).\\ \end{split}\] Upon replacing $Y$ with $\frac{\sqrt{X}}{12\sqrt{3}}$, it is clear that
\[\mathfrak{d}_{\leq 1}=\lim_{Y\rightarrow \infty} \left(\frac{\# \mathcal{S}' (Y)}{\# \mathcal{S}(Y)}\right).\] 

Let $\mathcal{T}(Y)$ be the set of $a\in \mathcal{S}(Y)$ such that 
\begin{itemize}
    \item $a>0$, 
    \item $3\nmid a$,
    \item $2^4|a$ and $2^5\nmid a$,
    \item write $a=16 a'$, then, $a'$ is squarefree.
\end{itemize} 

\begin{proposition}\label{dim is 0 or 1}
    For $a\in \mathcal{T}(Y)$, we have that \[\dim_{\F_3} \op{Sel}^\varphi(E_a/\bK)\in \{r_3(L), r_3(L)+1\}.\]
\end{proposition}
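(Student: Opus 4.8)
The plan is to apply part (1) of Theorem \ref{type1bounds}, which asserts that $\dim_{\F_3}\op{Sel}^\varphi(E_a/\bK)\in\{r_3(L),r_3(L)+1\}$ precisely when $a\notin\bK^{*2}$, $a$ is sixth-power free in $\bK$, and the set $S_a$ is empty. So the entire proof reduces to verifying, for every $a\in\mathcal{T}(Y)$, that these three hypotheses hold. The sixth-power freeness in $\bK$ should follow from the defining conditions of $\mathcal{T}(Y)$: if $a=16a'$ with $a'$ squarefree, $3\nmid a$, and $2^4\|a$, then the valuation of $a$ at the prime $\varpi$ above $3$ and at any prime of $\bK$ above a rational prime is controlled, so no sixth power can divide $a$ in $\cO_\bK$. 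Similarly, $a\notin\bK^{*2}$ should follow from examining the $2$-adic valuation: since $2$ is inert (or at worst unramified) in $\bK=\Q(\mu_3)$ and $v_2(a)=4$ is even but $a'$ is squarefree and odd, one checks $a$ cannot be a square in $\bK^*$ by a valuation/residue argument at an appropriate prime.

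First I would carefully establish that $S_a=\emptyset$, which is the crux of the argument. Recall
\[S_a=\{\mathfrak{q}\in\Sigma_{\bK}\mid a\in\bK_\mathfrak{q}^{*2}\text{ and }\upsilon_\mathfrak{q}(4a)\not\equiv 0\pmod 6\}.\]
For each prime $\mathfrak{q}$ of $\bK$ I would show that at least one of the two conditions fails. The strategy is to split into cases according to the rational prime $\ell$ below $\mathfrak{q}$. For $\ell=2$: since $2^4\|a$, one has $\upsilon_2(4a)=6$ (as $\upsilon_2(4)=2$ and $\upsilon_2(a)=4$), and $2$ is unramified in $\bK$, so $\upsilon_\mathfrak{q}(4a)=6\equiv 0\pmod 6$, and the valuation condition fails. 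For $\ell=3$: since $3\nmid a$ and $3$ ramifies as $\varpi^2$ up to units, $\upsilon_\varpi(4a)=\upsilon_\varpi(4)=0\equiv 0\pmod 6$, so again the valuation condition fails at the prime above $3$. For $\ell\geq 5$: here the condition $\upsilon_\mathfrak{q}(4a)\not\equiv 0\pmod 6$ forces $\upsilon_\mathfrak{q}(a)\equiv 0\pmod 6$ to fail, but since $a'=a/16$ is squarefree and coprime to $6$, one has $\upsilon_\ell(a)\in\{0,1\}$; I would argue that when $\upsilon_\mathfrak{q}(a)=1$ the local square condition $a\in\bK_\mathfrak{q}^{*2}$ fails because a uniformizer-valuation-one element cannot be a square, and when $\upsilon_\mathfrak{q}(a)=0$ the valuation condition fails. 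Hence $S_a=\emptyset$ in all cases.

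The main obstacle I anticipate is the careful bookkeeping of valuations in $\bK=\Q(\mu_3)$, accounting for the splitting behaviour of each rational prime: $3$ ramifies (with $(3)=\varpi^2$ up to a unit), while a rational prime $\ell\geq 5$ either splits or is inert, so the relation between $\upsilon_\mathfrak{q}$ and $\upsilon_\ell$ depends on the ramification index, which is $1$ for all $\ell\neq 3$. The factor of $4$ appearing in $\upsilon_\mathfrak{q}(4a)$ is harmless away from $\ell=2$ but is exactly what shifts $\upsilon_2(a)=4$ up to a multiple of $6$ at the prime $2$, which is the reason the conditions $2^4\|a$ were imposed in the definition of $\mathcal{T}(Y)$. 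Once $S_a=\emptyset$ is secured together with $a\notin\bK^{*2}$ and sixth-power freeness, the conclusion is immediate from Theorem \ref{type1bounds}(1). I would therefore organize the proof as a short sequence of local valuation checks, handling $\ell=2$, $\ell=3$, and $\ell\geq 5$ separately, and then invoke the cited theorem.
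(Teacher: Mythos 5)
Your proposal follows essentially the same route as the paper: reduce to showing $S_a=\emptyset$ via a case analysis on the rational prime below $\mathfrak{q}$ (using $2^4\,\|\,a$ to get $\upsilon_\mathfrak{q}(4a)=6$ at $2$, $3\nmid a$ at $3$, and squarefreeness of $a'=a/16$ to rule out $\ell\geq 5$), then invoke Theorem \ref{type1bounds}(1). Your additional remarks on verifying $a\notin\bK^{*2}$ and sixth-power freeness in $\bK$ are hypotheses the paper's proof passes over silently, but they do not change the substance of the argument.
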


\begin{proof}
    It suffices to show that $S_a=\emptyset$. Recall that $$S_a:=\{ \mathfrak{q} \in \Sigma_{\bK} \mid a \in {\bK}_\mathfrak{q}^{*2} \text{ and } \upsilon_\mathfrak{q} (4a) \not\equiv 0 \pmod 6 \}.$$ Let $q$ be the rational prime such that $\mathfrak{q}|q$. First consider the case when $q\neq 2,3$. In this case, $a$ is not divisible by $q^2$ since it is squarefree. Moreover, $q$ is unramified in $\bK$. Suppose that $v_\mathfrak{q}(4a)\not \equiv 0\mod{6}$. Then, in particular, this would imply that $q$ divides $a$ exactly once. Let $\pi_\mathfrak{q}$ be the uniformizer of ${\bK}_\mathfrak{q}$. We find that $\pi_\mathfrak{q}$ also divides $a$ exactly once and hence, $a\notin ({\bK}_\mathfrak{q}^*)^2$, which implies that $\mathfrak{q}\notin S_a$. Next, suppose that $q=2$, since $2$ is unramified in $\bK$, and the exact power of $2$ that divides $a$ is $4$, we find that $v_{\mathfrak{q}}(a)=4$. As a result, $v_{\mathfrak{q}}(4a)=6$ and thus, $\mathfrak{q}\notin S_a$. Finally, we get to the case when $q=3$. In this case, since $3\nmid a$, we find that $v_{\mathfrak{q}}(4a)=0$ and hence, $\mathfrak{q}\notin S_a$. We have thus shown that $S_a$ is empty and the result thus follows from Theorem \ref{type1bounds}. 
\end{proof}

\begin{proposition}\label{asymptotics for S(Y) and T(Y)}
    With respect to the notation above, the following assertions hold
    \begin{enumerate}
        \item\label{p1 of asymptotics for S(Y) and T(Y)} $\# \mathcal{T}(Y)\sim \frac{1}{32\zeta(2)} Y$, 
        \item\label{p2 of asymptotics for S(Y) and T(Y)} $\# \mathcal{S}(Y)\sim \frac{2}{\zeta(6)} Y$, 
        \item\label{p3 of asymptotics for S(Y) and T(Y)} $\op{lim}_{Y \to \infty} \frac{\# \mathcal{T}(Y)}{\# \mathcal{S}(Y)}=\frac{\zeta(6)}{64\zeta(2)}.$
    \end{enumerate}
\end{proposition}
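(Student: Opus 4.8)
The plan is to prove the three assertions by reducing each one to a density computation for integers defined by congruence conditions, and then invoking Proposition \ref{prop Phi asymptotic} to evaluate the relevant densities. The first two parts are independent asymptotic counts, and the third is an immediate consequence obtained by dividing them.

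For part \eqref{p1 of asymptotics for S(Y) and T(Y)}, recall that $\mathcal{T}(Y)$ consists of those $a \in \mathcal{S}(Y)$ with $a>0$, $3\nmid a$, $2^4\mid a$, $2^5\nmid a$, and $a'=a/16$ squarefree. I would encode each of these as a local condition $\Phi_p$ at the corresponding prime, working over the interval $\Omega(Y)$ of positive integers in $(0,Y)$ (so $c=0$, $d=1$ in the notation of Proposition \ref{prop Phi asymptotic}). The condition at $p=2$ forces $v_2(a)=4$, which together with $a=16a'$, $a'$ odd, is a single residue class modulo $2^5$, giving $\mathfrak{d}(\Phi_2)=1/32$; at $p=3$ the condition $3\nmid a$ gives $\mathfrak{d}(\Phi_3)=2/3$; and for $p\ge 5$ the squarefreeness of $a'$ (equivalently of $a$, since $16$ contributes no odd prime squares) gives $\mathfrak{d}(\Phi_p)=1-p^{-2}$. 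The hypotheses of Proposition \ref{prop Phi asymptotic} hold because $\sum_{p\ge 5}p^{-2}<\infty$ and $\prod_{p\ge 5}(1-p^{-2})$ converges. Hence
\[
\#\mathcal{T}(Y)\sim \left(\frac{1}{32}\times\frac{2}{3}\times\prod_{p\ge 5}\left(1-\frac{1}{p^2}\right)\right)Y.
\]
Since $\prod_{p\ge 5}(1-p^{-2})=\zeta(2)^{-1}(1-1/4)^{-1}(1-1/9)^{-1}=\zeta(2)^{-1}\cdot\frac{4}{3}\cdot\frac{9}{8}$, the constant simplifies to $\frac{1}{32}\cdot\frac{2}{3}\cdot\frac{3}{2}\cdot\zeta(2)^{-1}=\frac{1}{32\zeta(2)}$, as claimed.

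For part \eqref{p2 of asymptotics for S(Y) and T(Y)}, I would count sixth power free integers $a$ with $|a|\le Y$. The density of sixth power free integers is the classical value $\zeta(6)^{-1}$, which I would again realize through Proposition \ref{prop Phi asymptotic} by taking $\Phi_p$ to be the condition $p^6\nmid a$, so that $\mathfrak{d}(\Phi_p)=1-p^{-6}$ and $\prod_p(1-p^{-6})=\zeta(6)^{-1}$; the convergence hypotheses are immediate from $\sum_p p^{-6}<\infty$. Because $\mathcal{S}(Y)$ allows $a$ to be negative (the defining condition is $a\le Y$, with no positivity constraint) and both halves $(0,Y)$ and $(-Y,0)$ carry the same density $\zeta(6)^{-1}$, the interval has total length $2Y$ and one obtains the factor of $2$:
\[
\#\mathcal{S}(Y)\sim \frac{2}{\zeta(6)}Y.
\]

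Finally, part \eqref{p3 of asymptotics for S(Y) and T(Y)} follows by dividing the two asymptotics:
\[
\lim_{Y\to\infty}\frac{\#\mathcal{T}(Y)}{\#\mathcal{S}(Y)}=\frac{1/(32\zeta(2))}{2/\zeta(6)}=\frac{\zeta(6)}{64\zeta(2)}.
\]
The main obstacle is not conceptual but bookkeeping: correctly identifying the residue-class count at $p=2$ (ensuring $v_2(a)=4$ exactly is captured by a single class modulo $2^5$ rather than being miscounted), and being careful that the squarefreeness condition on $a'$ for $p\ge 5$ coincides exactly with the squarefreeness of $a$ at those primes. Once the local densities are pinned down, verifying the summability and convergence hypotheses of Proposition \ref{prop Phi asymptotic} is routine, and the Euler-factor manipulations that produce the clean constants $\frac{1}{32\zeta(2)}$ and $\frac{2}{\zeta(6)}$ are elementary.
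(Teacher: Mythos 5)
Your proposal is correct and follows essentially the same route as the paper: encode the defining conditions of $\mathcal{T}(Y)$ as local densities $\mathfrak{d}(\Phi_2)=\tfrac{1}{32}$, $\mathfrak{d}(\Phi_3)=\tfrac{2}{3}$, $\mathfrak{d}(\Phi_p)=1-p^{-2}$ for $p\geq 5$, apply Proposition \ref{prop Phi asymptotic}, and divide the two asymptotics for part (3). The only difference is that you spell out part (2) (the $p^6\nmid a$ conditions over the interval of length $2Y$), which the paper leaves to the reader.
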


\begin{proof}
We begin by proving \eqref{p1 of asymptotics for S(Y) and T(Y)}. At each prime $p$, we put a local condition $\Phi_p$ at $p$ defined as follows. For $p=2$, we say that $a$ satisfies $\Phi_2$ if $a\equiv 16\pmod{32}$. At $p=3$, say that $a$ satisfied $\Phi_3$ if $3\nmid a$. Finally for $p\geq 5$, $a$ satisfied $\Phi_p$ if $p^2\nmid a$. The local densities $\mathfrak{d}(\Phi_p)$ are as follows
 \[ \mathfrak{d}(\Phi_p)=\begin{cases}
    \frac{1}{32} & \text{ if }p=2;\\
    \frac{2}{3} & \text{ if }p=3;\\
    1-\frac{1}{p^2} & \text{ if } p\geq 5.
 \end{cases}\]
It follows from Proposition \ref{prop Phi asymptotic} that 
\[\begin{split}\lim_{Y \rightarrow \infty} \frac{\# \mathcal{T}(Y)}{Y} & =  \prod_p \mathfrak{d}(\Phi_p) \\
 & =\frac{1}{32}\times \frac{2}{3}\times \prod_{p\geq 5} \left(1-\frac{1}{p^2}\right) \\
 & = \frac{1}{48 (1-1/4)(1-1/9)\zeta(2)}=\frac{1}{32\zeta(2)}. \end{split}\]
\par The proof of part \eqref{p2 of asymptotics for S(Y) and T(Y)} is similar and is left to the reader. Part \eqref{p3 of asymptotics for S(Y) and T(Y)} follows from \eqref{p1 of asymptotics for S(Y) and T(Y)} and \eqref{p2 of asymptotics for S(Y) and T(Y)}. 
 \end{proof}

\begin{theorem}\label{main density result}
    There is a positive density of $6$-th power free integers $a$, such that 
    \[\dim_{\F_3}\op{Sel}^\varphi(E_a/\bK)\in \{r_3(L_a), r_3(L_a)+1\}.\] More precisely, 
    \[\liminf_{X\rightarrow \infty} \frac{\#\left\{a\in \mathcal{C}_1(X)\mid \dim_{\F_3}\op{Sel}^\varphi(E_a/\bK)\in \{r_3(L_a), r_3(L_a)+1\}\right\}}{\# \mathcal{C}_1(X)}\geq \frac{\zeta(6)}{64 \zeta(2)}.\]
\end{theorem}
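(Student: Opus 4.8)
The plan is to reduce the theorem to the subfamily $\mathcal{T}(Y)$ studied in the preceding propositions, and then to chain together the local-condition analysis with the asymptotic count already established. First I would translate the statement from the $\mathcal{C}_1(X)$-ordering (by discriminant) into the $\mathcal{S}(Y)$-ordering (by the integer $a$ itself), using the identity $\mathcal{C}_1(X)=\mathcal{S}(\frac{\sqrt{X}}{12\sqrt{3}})$ recorded just before Proposition \ref{dim is 0 or 1}; since this is a monotone reparametrization $Y=\frac{\sqrt{X}}{12\sqrt{3}}$, the $\liminf$ of the ratio over $\mathcal{C}_1(X)$ equals the corresponding $\liminf$ over $\mathcal{S}(Y)$, so it suffices to work with $\mathcal{S}(Y)$ throughout.

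Next I would observe that Proposition \ref{dim is 0 or 1} already guarantees that every $a\in\mathcal{T}(Y)$ satisfies the desired condition $\dim_{\F_3}\op{Sel}^\varphi(E_a/\bK)\in\{r_3(L_a),r_3(L_a)+1\}$. Consequently $\mathcal{T}(Y)$ is contained in the counting set appearing in the theorem, and so the numerator is bounded below by $\#\mathcal{T}(Y)$. This gives the inequality
\[
\frac{\#\left\{a\in\mathcal{S}(Y)\mid \dim_{\F_3}\op{Sel}^\varphi(E_a/\bK)\in\{r_3(L_a),r_3(L_a)+1\}\right\}}{\#\mathcal{S}(Y)}\geq \frac{\#\mathcal{T}(Y)}{\#\mathcal{S}(Y)}.
\]

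To finish, I would pass to the limit on the right-hand side and invoke Proposition \ref{asymptotics for S(Y) and T(Y)}\eqref{p3 of asymptotics for S(Y) and T(Y)}, which asserts that $\lim_{Y\to\infty}\frac{\#\mathcal{T}(Y)}{\#\mathcal{S}(Y)}=\frac{\zeta(6)}{64\zeta(2)}$. Taking $\liminf$ on the left (and noting that a genuine limit exists on the right, so it may be freely substituted) yields the claimed lower bound $\frac{\zeta(6)}{64\zeta(2)}$, and positivity of this constant establishes that the density is positive.

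I do not expect any serious obstacle here: the real content has been front-loaded into Proposition \ref{dim is 0 or 1} (the local computation showing $S_a=\emptyset$ on $\mathcal{T}(Y)$) and into the sieve asymptotics of Proposition \ref{asymptotics for S(Y) and T(Y)}, which in turn rest on the general density result Proposition \ref{prop Phi asymptotic}. The only point deserving a moment's care is the bookkeeping in the change of ordering from $X$ to $Y$: one must confirm that because the map $X\mapsto\frac{\sqrt{X}}{12\sqrt{3}}$ is a continuous increasing bijection of $(0,\infty)$ onto itself, limits and $\liminf$'s are preserved under the substitution, so no distortion of densities occurs. Everything else is a direct assembly of results already in hand.
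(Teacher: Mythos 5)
Your proposal is correct and follows essentially the same route as the paper: reduce from the discriminant ordering $\mathcal{C}_1(X)$ to the ordering $\mathcal{S}(Y)$ via $Y=\frac{\sqrt{X}}{12\sqrt{3}}$, note that Proposition \ref{dim is 0 or 1} places $\mathcal{T}(Y)$ inside the counting set, and conclude with the density computation of Proposition \ref{asymptotics for S(Y) and T(Y)}. No gaps.
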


\begin{proof}
    It is clear that 
    \[\begin{split}& \liminf_{X\rightarrow \infty} \frac{\#\left\{a\in \mathcal{C}_1(X)\mid \dim_{\F_3}\op{Sel}^\varphi(E_a/\bK)\in \{r_3(L_a), r_3(L_a)+1\}\right\}}{\# \mathcal{C}_1(X)} \\
    =& \liminf_{Y\rightarrow \infty} \frac{\#\left\{a\in \mathcal{S}(Y)\mid \dim_{\F_3}\op{Sel}^\varphi(E_a/\bK)\in \{r_3(L_a), r_3(L_a)+1\}\right\}}{\# \mathcal{S}(Y)}.\end{split}\]

    It follows from Proposition \ref{dim is 0 or 1} that 
    \[\mathcal{T}(Y)\subseteq \left\{a\in \mathcal{S}(Y)\mid \dim_{\F_3}\op{Sel}^\varphi(E_a/\bK)\in \{r_3(L_a), r_3(L_a)+1\}\right\}.\] Thus from Proposition \ref{asymptotics for S(Y) and T(Y)}, the result follows.
\end{proof}

\par For $a=16a'$ in $\mathcal{T}(Y)$, let $L_a:=\Q(\sqrt{-3},\sqrt{a})=\Q(\sqrt{-3},\sqrt{a'})$. Note that $L_a$ contains the quadratic subfields $\bK$, $\Q(\sqrt{a'})$ and $\Q(\sqrt{-3 a'})$. 

\begin{lemma}\label{lemma h_3 of L_a is 1}
    With respect to notation above, suppose that $r_3\left(\Q(\sqrt{-3a'})\right)=0$. Then, we have that $r_3(L_a)=0$.
\end{lemma}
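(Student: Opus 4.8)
The plan is to reduce the vanishing of $r_3(L_a)$ to statements about the three quadratic subfields of $L_a$, and then to feed in Scholz's reflection theorem together with the fact that $\bK$ has class number one. Set $G:=\op{Gal}(L_a/\Q)$. Since $a'$ is a positive squarefree integer, we may assume $a'>1$ (the case $a'=1$ gives $L_a=\bK$, for which the claim is immediate), so $L_a$ is biquadratic and $G\cong(\Z/2\Z)^2$. The three subgroups of order $2$ have fixed fields $k_1:=\bK=\Q(\sqrt{-3})$, $k_2:=\Q(\sqrt{a'})$ and $k_3:=\Q(\sqrt{-3a'})$; let $\sigma_i$ generate $\op{Gal}(L_a/k_i)$ and put $M:=\op{Cl}(L_a)[3]$.

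First I would establish the decomposition
\[ r_3(L_a)=r_3(k_1)+r_3(k_2)+r_3(k_3). \]
Because $|G|=4$ is prime to $3$, the module $M$ is a semisimple $\F_3[G]$-module and decomposes as the direct sum of its $\chi$-eigenspaces over the four characters $\chi$ of $G$. For each subfield $k$ of $L_a$, write $j_k\colon\op{Cl}(k)\to\op{Cl}(L_a)$ for extension of ideals and $N_k$ for the norm; the standard identities give $N_k\circ j_k=[L_a:k]$ and $j_k\circ N_k=\sum_{g\in\op{Gal}(L_a/k)}g$. On $3$-primary parts $[L_a:k]$ is invertible, so $j_k$ is injective and identifies $\op{Cl}(k)[3]$ with the $\op{Gal}(L_a/k)$-fixed submodule of $M$. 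Taking $k=\Q$ yields $M^{G}\cong\op{Cl}(\Q)[3]=0$, so the trivial-character eigenspace vanishes; taking $k=k_i$ identifies $\op{Cl}(k_i)[3]$ with the eigenspace attached to the nontrivial character cutting out $k_i$. Adding the dimensions of the four eigenspaces gives the displayed formula.

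Now $k_1=\bK$ has class number $1$, so $r_3(k_1)=0$. For the two remaining terms I would invoke Scholz's reflection theorem for the mirror pair $\Q(\sqrt{a'})$ and $\Q(\sqrt{-3a'})$, which, for $a'$ a positive squarefree integer, gives
\[ r_3\!\left(\Q(\sqrt{a'})\right)\le r_3\!\left(\Q(\sqrt{-3a'})\right). \]
By hypothesis the right-hand side is $0$, hence $r_3(k_2)=0$, and also $r_3(k_3)=r_3(\Q(\sqrt{-3a'}))=0$. Substituting into the formula forces $r_3(L_a)=0$. The eigenspace bookkeeping is routine, relying only on $\gcd(|G|,3)=1$ and the norm/extension identities; the essential arithmetic input—and the step I expect to be the crux—is Scholz's reflection theorem, since it is precisely what transfers the vanishing hypothesis on the imaginary field $\Q(\sqrt{-3a'})$ to its real companion $\Q(\sqrt{a'})$, without which the conclusion would not follow from the hypothesis alone.
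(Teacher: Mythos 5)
Your proof is correct and follows essentially the same route as the paper: Scholz's reflection theorem transfers the vanishing hypothesis on $r_3\left(\Q(\sqrt{-3a'})\right)$ to $r_3\left(\Q(\sqrt{a'})\right)$, and the decomposition $r_3(L_a)=r_3(\bK)+r_3\left(\Q(\sqrt{a'})\right)+r_3\left(\Q(\sqrt{-3a'})\right)$ then forces the conclusion. The only difference is that you derive this decomposition directly from the semisimplicity of $\op{Cl}(L_a)[3]$ as an $\F_3[\op{Gal}(L_a/\Q)]$-module, whereas the paper cites it as a formula of Herglotz.
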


\begin{proof}
       The Scholz' reflection principal \cite{sc} asserts that \[r_3(\Q(\sqrt{a'}))\in \{r_3\left(\Q(\sqrt{-3a'})\right), r_3\left(\Q(\sqrt{-3a'})\right)-1\}.\] Therefore, it follows from our assumptions that $r_3(\Q(\sqrt{a'}))=0$. Furthermore, a result by Hergoltz \cite{her} gives the formula \[r_3(L_{a'})=r_3\left(\Q(\sqrt{-3a'})\right)+r_3\left(\Q(\sqrt{a'})\right)=0.\] Note that $L_a=L_{a'}$, and hence, $r_3(L_a)=0$.
\end{proof}

\par Given $Y>0$, let \[\mathcal{T}'(Y):=\{a\in \mathcal{T}(Y)\mid \op{dim}_{\F_3} \op{Sel}^\varphi(E_a/\bK)\leq 1\}=\mathcal{T}(Y)\cap \mathcal{S}'(Y).\] 
\begin{lemma}
    For $a\in \mathcal{T}(Y)$, if $r_3(\Q(\sqrt{-3a'}))=0$, then, $a\in \mathcal{T}'(Y)$. 
\end{lemma}
\begin{proof}
    The result follows from Proposition \ref{dim is 0 or 1} and Lemma \ref{lemma h_3 of L_a is 1}.
\end{proof}

Let $Y>0$ be a real number and let $\mathcal{N}(Y)$ be the set of discriminants $\Delta_2 \in (-Y,0)$ such that 
\begin{itemize}
    \item $2 \nmid \Delta_2$,
    \item $3 \mid \Delta_2$ and
    \item $r_3\left(\Q(\sqrt{\Delta_2})\right)=0$.
\end{itemize}

\begin{lemma}\label{injection of N into T'}
    With respect to notation above, there is an injection
    \[\iota: \mathcal{N}\left(\frac{3Y}{16}\right) \hookrightarrow \mathcal{T}'(Y).\]
\end{lemma}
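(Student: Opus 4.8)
The plan is to construct $\iota$ explicitly by sending a negative quadratic discriminant $\Delta_2$ to the integer $a := -\tfrac{16}{3}\Delta_2$, and then to verify membership in $\mathcal{T}'(Y)$ through congruence bookkeeping together with the lemma immediately preceding this statement. First I would unpack the hypotheses on $\Delta_2 \in \mathcal{N}(3Y/16)$. Since $\Delta_2$ is a fundamental discriminant with $2 \nmid \Delta_2$, it is squarefree and satisfies $\Delta_2 \equiv 1 \pmod 4$; combined with $3 \mid \Delta_2$ and squarefreeness, this forces $3 \,\|\, \Delta_2$. Writing $a' := -\Delta_2/3$, it follows that $a'$ is a positive, odd, squarefree integer coprime to $3$. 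The crucial congruence is $a' \equiv 1 \pmod 4$: since $-3 \equiv 1 \pmod 4$, the relation $-3a' = \Delta_2 \equiv 1 \pmod 4$ forces $a' \equiv 1 \pmod 4$. This is precisely the point at which the hypothesis $2 \nmid \Delta_2$ is used, since it guarantees that $-3a'$ is itself the fundamental discriminant of $\Q(\sqrt{-3a'})$.

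Next I would set $a := 16a'$ and define $\iota(\Delta_2) := a$, checking $a \in \mathcal{T}(Y)$ against the four defining conditions of $\mathcal{T}(Y)$. The conditions $a > 0$ and $3 \nmid a$ are immediate from the properties of $a'$; since $a'$ is odd we have $2^4 \,\|\, a$; and $a = 16a'$ with $a'$ odd and squarefree exhibits $a$ as a sixth-power-free integer of exactly the required shape. For the range condition, $\Delta_2 \in (-3Y/16, 0)$ gives $a' = |\Delta_2|/3 < Y/16$, hence $a = 16a' < Y$, so indeed $a \in \mathcal{T}(Y)$.

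To land in $\mathcal{T}'(Y)$ I would observe that $\Q(\sqrt{\Delta_2}) = \Q(\sqrt{-3a'})$, so the hypothesis $r_3\!\left(\Q(\sqrt{\Delta_2})\right) = 0$ is literally the hypothesis $r_3\!\left(\Q(\sqrt{-3a'})\right) = 0$. The preceding lemma then yields $a \in \mathcal{T}'(Y)$. Injectivity is immediate, since $\Delta_2 = -\tfrac{3}{16}a$ recovers $\Delta_2$ from $a = \iota(\Delta_2)$, so $\iota$ is in fact a bijection onto its image.

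I expect no serious obstacle here: the argument is essentially congruence bookkeeping dressed up as a map, and the only delicate point is verifying $a' \equiv 1 \pmod 4$, equivalently that $-3a'$ is the fundamental discriminant of $\Q(\sqrt{-3a'})$ rather than $-12a'$. This is exactly what the oddness hypothesis $2 \nmid \Delta_2$ secures, and it is the reason the target discriminant bound is scaled by $3/16$ rather than some other factor.
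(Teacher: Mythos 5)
Your argument is correct and is essentially the paper's own proof: write $\Q(\sqrt{\Delta_2})=\Q(\sqrt{-3a'})$ with $a'$ positive, squarefree and coprime to $6$, set $a=16a'$, check the defining conditions of $\mathcal{T}(Y)$, and invoke the preceding lemma to conclude $a\in\mathcal{T}'(Y)$, with injectivity clear since $\Delta_2=-3a/16$ is recovered from $a$. One incidental remark: your computation $a'\equiv 1\pmod{4}$ is the correct one (the paper's proof asserts $a'\equiv 3\pmod{4}$, which is a slip), but as you observe this congruence plays no role in verifying membership in $\mathcal{T}'(Y)$.
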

\begin{proof}
Let $\Delta_2 \in \mathcal{N}(Y)$, we write $\Q(\sqrt{\Delta_2})=\Q(\sqrt{-3a'})$, where $a'>0$ is square-free and not divisible by $2$ and $3$. Note that since $\Delta_2$ is odd, $a' \equiv 3 \pmod 4$. Setting $a:=16a'$, we observe that $a \in \mathcal{T}'(\frac{16Y}{3})$. Note that $\Delta_2$ uniquely determines $a$. Replacing $Y$ with $\frac{3Y}{16}$, we thus get an injection \[\iota: \mathcal{N}\left(\frac{3Y}{16}\right) \hookrightarrow \mathcal{T}'(Y).\]    
\end{proof}

\begin{theorem}\label{thm 4.7}
   For every $\epsilon>0$, there is a constant $C_\epsilon>0$ such that for all $Y>C_\epsilon$, 
\[\# \mathcal{N}(Y)\geq \left(\frac{1}{4\pi^2}-\epsilon\right) Y.\]
\end{theorem}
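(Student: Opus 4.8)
The plan is to realize $\mathcal{N}(Y)$ as the subset of $\mathcal{M}^-(Y)$ on which the additional arithmetic condition $r_3(\Q(\sqrt{\Delta_2}))=0$ holds, and then to extract a lower bound on its size from the first-moment estimate of Theorem \ref{DH modified thm} by a standard averaging argument. The crucial observation is that $h_3^*(\Delta_2)=3^{r_3(\Q(\sqrt{\Delta_2}))}$ is always a power of $3$; in particular $h_3^*(\Delta_2)=1$ precisely when $r_3(\Q(\sqrt{\Delta_2}))=0$, and $h_3^*(\Delta_2)\geq 3$ otherwise. Thus $\mathcal{N}(Y)=\{\Delta_2\in\mathcal{M}^-(Y)\mid h_3^*(\Delta_2)=1\}$, so it suffices to count the discriminants in $\mathcal{M}^-(Y)$ on which $h_3^*$ takes its minimal value.

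First I would introduce the quantities $M:=\#\mathcal{M}^-(Y)$ and $S:=\sum_{\Delta_2\in\mathcal{M}^-(Y)}h_3^*(\Delta_2)$, together with $A:=\#\mathcal{N}(Y)$. Splitting the sum defining $S$ according to whether $h_3^*(\Delta_2)=1$ or $h_3^*(\Delta_2)\geq 3$, and using that there are exactly $M-A$ discriminants of the latter type, I obtain
\[
S\geq A\cdot 1+(M-A)\cdot 3=3M-2A,
\]
which rearranges to the key inequality $A\geq\tfrac{1}{2}(3M-S)$.

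Next I would insert the two asymptotic inputs already established in \S\ref{s 3}. By Lemma \ref{M pm asymptotic 1/2 pi^2} one has $M=\frac{1}{2\pi^2}Y+o(Y)$, and by Theorem \ref{DH modified thm} one has $S=2M+o(M)=\frac{1}{\pi^2}Y+o(Y)$. Substituting these into the key inequality gives
\[
A\geq\frac{1}{2}\left(\frac{3}{2\pi^2}Y-\frac{1}{\pi^2}Y\right)+o(Y)=\frac{1}{4\pi^2}Y+o(Y).
\]
Absorbing the $o(Y)$ term into the error then produces, for each $\epsilon>0$, a threshold $C_\epsilon$ beyond which $\#\mathcal{N}(Y)\geq\left(\frac{1}{4\pi^2}-\epsilon\right)Y$, as claimed.

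I do not anticipate a genuine obstacle, since the deep analytic content—the first moment $S\sim 2M$—is already supplied by Theorem \ref{DH modified thm}. The only points requiring care are the elementary but essential fact that $h_3^*$ takes values in powers of $3$ (so that each discriminant with nontrivial $3$-torsion contributes at least $3$ to $S$, which is exactly what makes the averaging bound nonvacuous), and the bookkeeping of the error terms needed to pass from the asymptotic lower bound to the uniform $\epsilon$-statement valid for all $Y>C_\epsilon$.
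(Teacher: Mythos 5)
Your proposal is correct and follows essentially the same route as the paper: the same splitting of $\sum_{\Delta_2\in\mathcal{M}^-(Y)}h_3^*(\Delta_2)$ over $\mathcal{N}(Y)$ and its complement, the same key inequality $\#\mathcal{N}(Y)\geq\tfrac{1}{2}\bigl(3\#\mathcal{M}^-(Y)-\sum h_3^*(\Delta_2)\bigr)$, and the same two asymptotic inputs from Theorem \ref{DH modified thm} and Lemma \ref{M pm asymptotic 1/2 pi^2}. Your explicit remark that $h_3^*$ is a power of $3$, hence $\geq 3$ when nontrivial, is exactly the (implicit) justification the paper uses for the lower bound on the complementary sum.
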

\begin{proof}
    Recall that $\mathcal{M}^-(Y)$ be the set of all  discriminants $\Delta_2 \in (-Y,0)$ such that $2 \nmid \Delta_2$ and $3 \mid \Delta_2$. Setting $\mathcal{N}'(Y):=\mathcal{M}^-(Y)\setminus \mathcal{N}(Y)$, observe that
\[\begin{split}\sum_{\Delta_2\in \mathcal{M}^-(Y)} h_3^*(\Delta_2)& = \sum_{\Delta_2\in \mathcal{N}(Y)} h_3^*(\Delta_2) +\sum_{\Delta_2\in \mathcal{N}'(Y)} h_3^*(\Delta_2) \\
& \geq \sum_{\Delta_2\in \mathcal{N}(Y)} 1+ \sum_{\Delta_2\in \mathcal{N}'(Y)} 3 \\ 
&= \# \mathcal{N}(Y)+ 3\# \mathcal{N}'(Y) \\
& = 3 (\# \mathcal{N}(Y)+ \# \mathcal{N}'(Y))- 2\# \mathcal{N}(Y)\\
& = 3 \# \mathcal{M}^-(Y)- 2\# \mathcal{N}(Y).
\end{split} \]

Therefore, we have shown that 
\[\# \mathcal{N}(Y)\geq \frac{1}{2}\left(3 \# \mathcal{M}^-(Y)-\sum_{\Delta_2\in \mathcal{M}^-(Y)} h_3^*(\Delta_2)\right).\]
Note that by Theorem \ref{DH modified thm}, \[\sum_{\Delta_2\in \mathcal{M}^-(Y)} h_3^*(\Delta_2)\sim 2 \# \mathcal{M}^-(Y),\] and that by Lemma \ref{M pm asymptotic 1/2 pi^2} 
\[\# \mathcal{M}^-(Y)\sim \frac{1}{2\pi^2} Y.\]Therefore, for every $\epsilon>0$, there is a constant $C_\epsilon>0$ such that for all $Y>C_\epsilon$, 
\[\# \mathcal{N}(Y)\geq \left(\frac{1}{4\pi^2}-\epsilon\right) Y.\]
\end{proof}

\begin{corollary}\label{last cor}
    The following assertions hold. 
    
    \begin{enumerate}
        \item\label{p1 of lastcor} For every $\epsilon>0$, there is a constant $C_\epsilon>0$ such that for all $Y>C_\epsilon$, 
\[\# \mathcal{T}'(Y)\geq \left(\frac{3}{64 \pi^2}-\epsilon\right) Y.\]
\item\label{p2 of lastcor} We have that 
\[\liminf_{Y\rightarrow \infty} \frac{\# \mathcal{T}'(Y) }{\# \mathcal{S}(Y)}\geq \frac{3 \zeta(6)}{128\pi^2} .\]
    \end{enumerate}
\end{corollary}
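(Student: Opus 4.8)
The plan is to chain together the injection of Lemma \ref{injection of N into T'} with the lower bound of Theorem \ref{thm 4.7}, and then to divide by the asymptotic count of $\mathcal{S}(Y)$ recorded in Proposition \ref{asymptotics for S(Y) and T(Y)}. Both parts are formal consequences of results already established; no new arithmetic input is required.

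First I would establish part \eqref{p1 of lastcor}. Lemma \ref{injection of N into T'} furnishes an injection $\iota: \mathcal{N}(3Y/16) \hookrightarrow \mathcal{T}'(Y)$, so that $\# \mathcal{T}'(Y) \geq \# \mathcal{N}(3Y/16)$. Fixing $\epsilon > 0$, I would apply Theorem \ref{thm 4.7} with a suitably small auxiliary tolerance $\epsilon'$ to the argument $3Y/16$: there is a constant $C_{\epsilon'}$ such that whenever $3Y/16 > C_{\epsilon'}$ one has
\[\# \mathcal{N}\!\left(\tfrac{3Y}{16}\right) \geq \left(\frac{1}{4\pi^2} - \epsilon'\right)\frac{3Y}{16}.\]
Since $\frac{1}{4\pi^2}\cdot \frac{3}{16} = \frac{3}{64\pi^2}$, expanding the right-hand side and choosing $\epsilon'$ small enough that $\frac{3\epsilon'}{16} < \epsilon$ yields $\# \mathcal{T}'(Y) \geq \left(\frac{3}{64\pi^2} - \epsilon\right)Y$ for all $Y$ exceeding $C_\epsilon := \frac{16}{3}C_{\epsilon'}$. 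The only care needed here is the bookkeeping relating $\epsilon$ and $\epsilon'$ under the rescaling $Y \mapsto 3Y/16$ of the variable.

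For part \eqref{p2 of lastcor}, I would combine the bound just proven with the asymptotic $\# \mathcal{S}(Y) \sim \frac{2}{\zeta(6)}Y$ supplied by Proposition \ref{asymptotics for S(Y) and T(Y)}\eqref{p2 of asymptotics for S(Y) and T(Y)}. Dividing the lower bound for $\# \mathcal{T}'(Y)$ by $\# \mathcal{S}(Y)$ and passing to the limit inferior gives
\[\liminf_{Y\rightarrow \infty} \frac{\# \mathcal{T}'(Y)}{\# \mathcal{S}(Y)} \geq \frac{\frac{3}{64\pi^2} - \epsilon}{\frac{2}{\zeta(6)}}\]
for every $\epsilon > 0$; letting $\epsilon \rightarrow 0$ and simplifying $\frac{3}{64\pi^2}\cdot\frac{\zeta(6)}{2} = \frac{3\zeta(6)}{128\pi^2}$ produces the claimed inequality.

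No step presents a genuine obstacle, since the substance is already concentrated in Theorem \ref{thm 4.7} (itself fed by the Davenport--Heilbronn estimate of Theorem \ref{DH modified thm}) and the sieve of Proposition \ref{prop Phi asymptotic}. The one point to watch is that Theorem \ref{thm 4.7} provides only a one-sided bound valid for large $Y$; consequently part \eqref{p1 of lastcor} must be stated with an explicit threshold $C_\epsilon$ rather than as a clean asymptotic, and part \eqref{p2 of lastcor} is correspondingly a $\liminf$ inequality rather than an equality.
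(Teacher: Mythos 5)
Your proposal is correct and follows essentially the same route as the paper: part (1) is obtained by composing the injection of Lemma \ref{injection of N into T'} with the lower bound of Theorem \ref{thm 4.7} (with the rescaling $Y \mapsto 3Y/16$ accounting for the factor $\frac{3}{64\pi^2} = \frac{1}{4\pi^2}\cdot\frac{3}{16}$), and part (2) follows by dividing by the asymptotic $\# \mathcal{S}(Y)\sim \frac{2}{\zeta(6)}Y$ from Proposition \ref{asymptotics for S(Y) and T(Y)}. The paper states this in one line; your version merely makes the $\epsilon$--$\epsilon'$ bookkeeping explicit, which is a harmless elaboration.
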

\begin{proof}
    Part \eqref{p1 of lastcor} is an immediate consequence of the Theorem \ref{thm 4.7} and Lemma \ref{injection of N into T'}. The Proposition \ref{asymptotics for S(Y) and T(Y)} asserts that 
    \begin{equation}\label{boring eqn 1}\# \mathcal{S}(Y)\sim \frac{2}{\zeta(6)} Y.\end{equation} Part \eqref{p2 of lastcor} is thus a consequence of part \eqref{p1 of lastcor} and \eqref{boring eqn 1}.
\end{proof}

\begin{theorem}\label{main Selmer density thm}
    There is a positive density of sixth power free integers $a$ such that $\dim_{\F_3}\op{Sel}^\varphi(E_a/\bK)\leq 1$. In greater detail, 
    \[\liminf_{X\rightarrow \infty} \frac{\# \left\{a\in \mathcal{C}_1(X)\mid \dim_{\F_3}\op{Sel}^\varphi(E_a/\bK)\leq 1 \right\}}{ \# \mathcal{C}_1(X)}\geq \frac{3 \zeta(6)}{128 \pi^2}.\]
\end{theorem}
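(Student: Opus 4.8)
The plan is to reduce the main statement to the results already established about the auxiliary families $\mathcal{S}(Y)$ and $\mathcal{T}'(Y)$, exploiting the relationship between $\mathcal{C}_1(X)$ and $\mathcal{S}(Y)$. First I would use the identity $\mathcal{C}_1(X)=\mathcal{S}\!\left(\frac{\sqrt{X}}{12\sqrt{3}}\right)$ established at the start of \S\ref{s 4}, which allows the $\liminf$ over $X$ to be rewritten as a $\liminf$ over $Y$ upon substituting $Y=\frac{\sqrt{X}}{12\sqrt{3}}$. Since this substitution is a monotone bijection sending $X\to\infty$ to $Y\to\infty$, the ratio
\[
\frac{\#\left\{a\in \mathcal{C}_1(X)\mid \dim_{\F_3}\op{Sel}^\varphi(E_a/\bK)\leq 1\right\}}{\#\mathcal{C}_1(X)}
\]
equals $\frac{\#\mathcal{S}'(Y)}{\#\mathcal{S}(Y)}$, exactly as recorded in the definition of $\mathfrak{d}_{\leq 1}$.

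Next I would invoke the containment $\mathcal{T}'(Y)\subseteq \mathcal{S}'(Y)$, which holds by definition since $\mathcal{T}'(Y)=\mathcal{T}(Y)\cap\mathcal{S}'(Y)$. This gives the lower bound
\[
\frac{\#\mathcal{S}'(Y)}{\#\mathcal{S}(Y)}\geq \frac{\#\mathcal{T}'(Y)}{\#\mathcal{S}(Y)}.
\]
Taking $\liminf$ as $Y\to\infty$ and applying part \eqref{p2 of lastcor} of Corollary \ref{last cor}, which states that $\liminf_{Y\rightarrow\infty}\frac{\#\mathcal{T}'(Y)}{\#\mathcal{S}(Y)}\geq \frac{3\zeta(6)}{128\pi^2}$, yields the claimed inequality. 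The positivity of the density is then immediate from $\frac{3\zeta(6)}{128\pi^2}>0$.

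Structurally, the entire proof is a bookkeeping assembly: the substantive content has already been carried out in Corollary \ref{last cor}, which in turn rests on Theorem \ref{thm 4.7} (the lower bound on $\#\mathcal{N}(Y)$ via the averaged $3$-torsion count), the injection $\iota$ of Lemma \ref{injection of N into T'}, and the asymptotic $\#\mathcal{S}(Y)\sim \frac{2}{\zeta(6)}Y$ from Proposition \ref{asymptotics for S(Y) and T(Y)}. I expect no genuine obstacle here; the only point requiring minor care is verifying that the change of variables $Y=\frac{\sqrt{X}}{12\sqrt{3}}$ legitimately transports the $\liminf$, which is clear because it is a continuous strictly increasing reparametrization and both numerator and denominator transform consistently. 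Thus the proof reduces to citing the identity of the two ratios, applying the set containment $\mathcal{T}'(Y)\subseteq \mathcal{S}'(Y)$, and quoting Corollary \ref{last cor}\eqref{p2 of lastcor}.
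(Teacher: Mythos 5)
Your proposal is correct and follows exactly the route the paper intends: the paper's proof is the one-line "immediate consequence of Corollary \ref{last cor}," and your write-up simply makes explicit the change of variables $\mathcal{C}_1(X)=\mathcal{S}\bigl(\frac{\sqrt{X}}{12\sqrt{3}}\bigr)$ and the containment $\mathcal{T}'(Y)\subseteq\mathcal{S}'(Y)$ that this relies on. No issues.
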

\begin{proof}
    The result is an immediate consequence of Corollary \ref{last cor}.
\end{proof}

Note that for $a \in \mathcal{T}'(Y)$, one has that $a \notin \bK^{*2}$ and that $S_a = \emptyset$. Thus, for $a \in \mathcal{T}'(Y)$, we get that $\dim_{\F_3}\op{Sel}^3(E_a/\bK) \le 2$ by \cite[Corollary 3.23]{jhamajumdarshingavekar}.

\begin{theorem}\label{main 3 Selmer thm}
    There is a positive density of sixth power free integers $a$ such that $\dim_{\F_3}\op{Sel}_3(E_a/\bK)\leq 2$. More precisely, one has 
    \[\liminf_{X\rightarrow \infty} \frac{\# \left\{a\in \mathcal{C}_1(X)\mid \dim_{\F_3}\op{Sel}_3(E_a/\bK)\leq 2 \right\}}{ \# \mathcal{C}_1(X)}\geq \frac{3 \zeta(6)}{128 \pi^2}.\]
\end{theorem}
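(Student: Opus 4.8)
The plan is to reduce the statement about the $3$-Selmer group to the statement about the $\varphi$-Selmer group that has already been established in Theorem \ref{main Selmer density thm}. The key observation, recorded in the remark immediately preceding the theorem, is that for every $a \in \mathcal{T}'(Y)$ one has $a \notin \bK^{*2}$ and $S_a = \emptyset$, so that \cite[Corollary 3.23]{jhamajumdarshingavekar} yields the pointwise bound $\dim_{\F_3}\op{Sel}_3(E_a/\bK) \leq 2$. Consequently there is a containment of sets
\[
\mathcal{T}'(Y) \subseteq \left\{ a \in \mathcal{S}(Y) \mid \dim_{\F_3}\op{Sel}_3(E_a/\bK) \leq 2 \right\}.
\]

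First I would convert the counting problem from the discriminant ordering on $\mathcal{C}_1(X)$ to the ordering on $\mathcal{S}(Y)$, exactly as in the proof of Theorem \ref{main density result}: since $\mathcal{C}_1(X) = \mathcal{S}\!\left(\tfrac{\sqrt{X}}{12\sqrt{3}}\right)$, substituting $Y = \tfrac{\sqrt{X}}{12\sqrt{3}}$ shows that the two $\liminf$ expressions agree, so it suffices to bound the density over $\mathcal{S}(Y)$. Next I would invoke the set containment above to obtain
\[
\liminf_{Y\rightarrow \infty} \frac{\#\left\{ a \in \mathcal{S}(Y) \mid \dim_{\F_3}\op{Sel}_3(E_a/\bK) \leq 2 \right\}}{\#\mathcal{S}(Y)} \geq \liminf_{Y\rightarrow \infty} \frac{\#\mathcal{T}'(Y)}{\#\mathcal{S}(Y)}.
\]
Finally I would apply Corollary \ref{last cor}\eqref{p2 of lastcor}, which gives precisely $\liminf_{Y\rightarrow\infty} \tfrac{\#\mathcal{T}'(Y)}{\#\mathcal{S}(Y)} \geq \tfrac{3\zeta(6)}{128\pi^2}$, and the result follows.

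There is essentially no new obstacle here, since all the analytic work (the Davenport--Heilbronn input via Theorem \ref{DH modified thm}, the sieve of Proposition \ref{prop Phi asymptotic}, and the Scholz reflection argument in Lemma \ref{lemma h_3 of L_a is 1}) has already been carried out in establishing the lower bound for $\#\mathcal{T}'(Y)$. The only point requiring care is verifying that the cited bound from \cite{jhamajumdarshingavekar} genuinely applies to every element of $\mathcal{T}'(Y)$, that is, confirming that $a \notin \bK^{*2}$ and $S_a = \emptyset$ hold simultaneously for such $a$; the vanishing of $S_a$ is already established in Proposition \ref{dim is 0 or 1}, and $a \notin \bK^{*2}$ holds because $a = 16a'$ with $a'$ squarefree and coprime to $6$. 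Given these, the theorem is a formal consequence of the $\varphi$-Selmer result, and I would present it as such rather than repeating the density computation.
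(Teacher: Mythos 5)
Your proposal is correct and follows essentially the same route as the paper: the paper likewise deduces the theorem immediately from the remark that $a \notin \bK^{*2}$ and $S_a = \emptyset$ for $a \in \mathcal{T}'(Y)$ (so that \cite[Corollary 3.23]{jhamajumdarshingavekar} gives $\dim_{\F_3}\op{Sel}_3(E_a/\bK)\leq 2$), combined with the density bound already obtained for $\mathcal{T}'(Y)$ via Theorem \ref{main Selmer density thm} and Corollary \ref{last cor}. Your extra care in checking $a\notin\bK^{*2}$ is a welcome (and harmless) addition, but the argument is the same.
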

\begin{proof}
    The result follows immediately from Theorem \ref{main Selmer density thm} and \cite[Corollary 3.23]{jhamajumdarshingavekar}.
\end{proof}
\bibliographystyle{alpha}
\bibliography{references}
\end{document}